\documentclass{article}

\usepackage{amsmath,amsfonts,amsthm,amssymb,graphics,graphicx}

\newtheorem{thm}{Theorem}[section]
\newtheorem{defin}[thm]{Definition}

\newtheorem{lem}[thm]{Lemma}

\newtheorem{prop}[thm]{Proposition}
\newtheorem{remark}[thm]{Remark}

\newcommand{\norm}[1]{\left|\!\left|{#1}\right|\!\right|}
\newcommand{\R}{\ensuremath{\mathbb{R}}}

\title{Semiclassical $L^{p}$ Estimates of Quasimodes on Submanifolds}
\author{Melissa Tacy}

\begin{document}

\maketitle

\begin{abstract}
Let $P = P(h)$  be a semiclassical pseudodifferential operator on a Riemannian manifold $M$. Suppose that $u(h)$ is a localised, $L^{2}$ normalised family of functions such that $P(h)u(h)$ is  $O(h)$  in $L^{2}$, as $h \to 0$. Then, for any submanifold $Y \subset M$, we obtain estimates on the $L^p$ norm of $u(h)$ restricted to $Y$, with exponents that are sharp for $h \to 0$. These results generalise those of Burq, G\'{e}rard and Tzvetkov \cite{burq} on $L^{p}$ norms for restriction of Laplacian eigenfunctions.
As part of the technical development we prove some extensions of the abstract Strichartz estimates of Keel and Tao \cite{keel}.
\end{abstract}

Let $P = P(h)$ be a semiclassical pseudodifferential operator on a Riemannian manifold $M$. We will assume that $P$ has a real principal symbol, and that its full symbol is smooth in the semiclassical parameter $h$. Other more technical assumptions on $P$ are given in Definition \ref{admissable}.
We prove estimates for approximate solutions $u = u(h)$ to the equation $P(h) u(h) = 0$. As usual in semiclassical analysis we assume that $u(h)$ is defined at least for a sequence $h_n$ tending to zero. 

Our precise definition of approximate solution, or quasimode, is that $P(u) = O_{L^{2}}(h)$ as $h \to 0$. This definition is natural with respect to localisation: if $P(u) = O_{L^{2}}(h)$, and $\chi$ is a pseudodifferential operator of order zero (with a symbol smooth in $h$), then $P(\chi u)$ is also $O_{L^{2}}(h)$. We will make the assumption that $u(h)$ can be localised, see Definition \ref{local}, and therefore will be able to reduce the problem to one of local analysis.

Given a submanifold $Y$ of $M$, we estimate the $L^p$ norm of the restriction of $u$ to $Y$, assuming the normalisation condition $\| u \|_{L^2(M)} = 1$. These estimates are of the form $\| u \|_{L^p(Y)} \leq C h^{-\delta}$ where $\delta$ depends on the dimension $n$ of $M$, the dimension $k$ of $Y$ and $p$ (except for one case where there is a logarithmic divergence) --- see Theorem \ref{maintheorem}. In every case the exponent $\delta(n, k, p)$ given by Theorem \ref{maintheorem} is optimal. Figure ~\ref{hypersurface} shows the exponent $\delta$ for a hypersurface  and, for comparison, the $L^{p}$ estimates over the whole manifold (Sogge \cite{sogge} for spectral clusters and Koch-Tataru-Zworski \cite{koch} for semiclassical operators). Figure ~\ref{submanifold} shows $\delta(n,k,p)$ for submanifolds of codimension greater than one.

\begin{figure}
\centering
\includegraphics[scale=0.45]{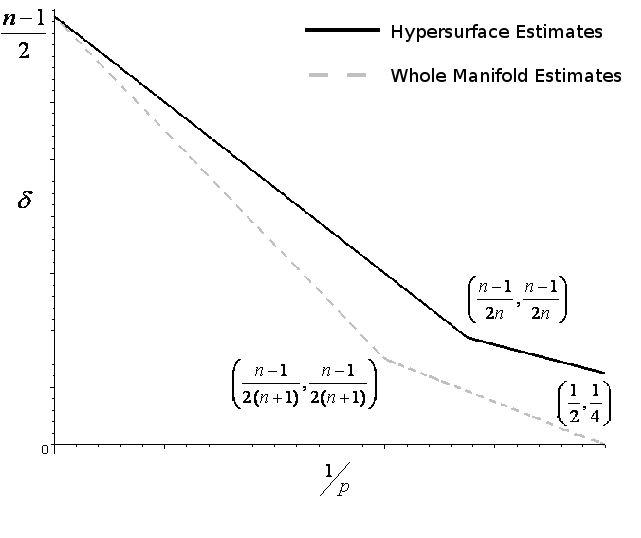} 
\caption{$\delta(n,k,p)$ plotted against $1/p$ for the hypersurface case (solid line) and whole manifold estimates (dashed line)}
\label{hypersurface}
\end{figure}

\begin{figure}
\centering
\includegraphics[scale=0.6]{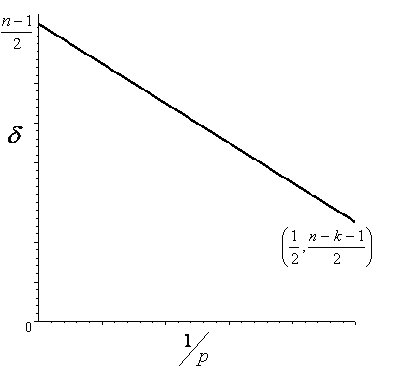} 
\caption{$\delta(n,k,p)$ plotted against $1/p$ for submanifolds of codimension greater than one}
\label{submanifold}
\end{figure}

The potential growth/concentration  of the quasimodes of a semiclassical operator is of great interest due to the connection to Quantum Mechanics. It is from Quantum Mechanics that we get the important set of motivating examples,
\begin{equation}P=h^{2}\Delta_{g}+V_{0}(x)\label{semiop}\end{equation}
here $\Delta_{g}$ is the (positive) Laplace-Beltrami operator associated with the metric $g$. We can transition between this picture and the usual eigenfunction picture of Quantum Mechanics by dividing the eigenfunction equation
$$\Delta{}u+V_{0}(x)u=Eu$$
by $E$. Then setting $E=1/h^{2}$ we have 
$$h^{2}\Delta{}u+h^{2}V_{0}(x)u-u=0$$
or 
$$Pu=0$$
where $P$ is as in $(\ref{semiop})$ with a potential term of $h^{2}V_{0}(x)-1$. Therefore the higher eigenvalue asymptotics of eigenfunctions of Quantum Mechanical systems corresponds to the $h\to{}0$ limit in semiclassical analysis. When $V_{0}(x)=0$ this problem reduces to estimating the size of Laplacian eigenfunctions restricted to a submanifold. A complete set of estimates for Laplacian eigenfunctions on compact manifolds is given by Burq, G\'{e}rard and Tzvetkov \cite{burq}.

A number of different techniques for studying the potential concentrations of eigenfunctions are available. A large body of recent work focuses on semiclassical measures (see for example Anatharaman \cite{anantharaman}, G\'{e}rard-Leichtnam \cite{gerard}, Zelditch \cite{zelditch} and Zelditch-Zworski \cite{zelditch2}). Sogge's work \cite{sogge} on spectral clusters give estimates for $\norm{u}_{L^{p}(M)}$ of the form $\norm{u}_{L^{p}(M)}\lesssim\lambda^{-\delta(n,p)}$ where $\lambda$ is the eigenvalue of $u$. This work is extended into the semiclassical regime by Burq, G\'{e}rard and Tzvetkov \cite{burq3} (for the Laplacian) and Koch, Tataru and Zworski \cite{koch} (for semiclassical operators). Multilinear estimates for spherical harmoics have also been obtained by Burq, G\'{e}rard and Tzvetkov \cite{burq3} In related work Koch and Tataru \cite{koch2} give $L^{p}$ estimates for eigenfunctions of the Hermite operator $H=-\Delta+x^{2}$ . In 2004 Reznikov \cite{reznikov} proved bounds for restrictions of Laplacian eigenfunctions to curves where the underlying manifold was a hyperbolic surface. In 2007 Burq, G\'{e}rard and Tzvetkov \cite{burq} produced results giving $L^{p}$ estimates of the restriction of eigenfunctions of the Laplace-Beltrami  operator on a compact manifold to a submanifold. This work directly extends these results using techniques found in Koch-Tataru-Zworski \cite{koch} and Burq-G\'{e}rard-Tzvetkov \cite{burq2} to move them into the more general semiclassical setting.

To continue we must define some objects from Semiclassical Analysis and give some basic results. A more detailed discussion of Semiclassical Analysis can be found in \cite{koch}, \cite{evans} and \cite{burq2}, however for the reader's convenience the main definitions and results used in this paper are provided in Section \ref{semiclassical}.

\section*{Acknowledgements}
I would like to thank Maciej Zworski for encouraging me to work on this problem and for many helpful conversations. Many thanks also to my supervisor Andrew Hassell for all his help preparing this manuscript and to Patrick G\'{e}rard for pointing out some useful references. While writing this paper I was supported by an Australian Postgraduate Award and much of the mathematical work was done while at the University of California Berkeley supported by a Fulbright Scholarship. My thanks to the Berkeley Mathematics Department for their hospitality.

\section{Semiclassical Analysis}\label{semiclassical}

Semiclassical analysis allows us to study Pseudodifferential and Fourier Integral Operators depending on a parameter which we denote as $h$. We think of this parameter as being small and obtain error terms bounded by powers of $h$. As in the normal pseudodifferential calculus an operator $P$ acting on $L^{2}$ functions $u$ is given by its symbol $p(x,\xi,h)$ and a quantisation procedure.

\begin{defin}
Let $p(x,\xi,h)\in{}C^{\infty}(T^{\star}\R^{n})$ be a symbol in the symbol space $S^{m}$. We define the left semiclassical quantisation $p(x,hD)$ as
$$p(x,hD)u(x)=\frac{1}{(2\pi{}h)^{n}}\int{}e^{\frac{i}{h}<x-y,\xi>}p(x,\xi,h)u(y)dyd\xi$$
and the Weyl semiclassical quantisation $p^{w}(x,hD)$ as
$$p^{w}(x,hD)u(x)=\frac{1}{(2\pi{}h)^{n}}\int{}e^{\frac{i}{h}<x-y,\xi>}p\left(\frac{x+y}{2},\xi,h\right)u(y)dyd\xi.$$
\end{defin}

\begin{remark}
For real symbols the Weyl quantisation $p^{w}(x,hD)$ is self-adjoint. For this reason it will sometimes be more convenient to use $p^{w}(x,hD)$ in place of $p(x,hD)$.
\end{remark}

\begin{defin}\label{local}
A function $u$ depending parametrically on $h$ is said to satisfy the localisation condition if there exists $\chi\in{}C_{c}(T^{\star}M)$ such that
$$u=\chi(x,hD)u+O_{\mathcal{S}}(h^{\infty}).$$
where  ${\mathcal S}$ is the space of Schwartz functions.
\end{defin} 

This assumption allows us to move from a global problem to a local one. As $\chi$ has compact support in $T^{\star}M$ we can write
$$\chi(x,\xi)=\sum_{i=1}^{N}\chi_{i}(x,\xi)$$
for some $N<\infty$ where each $\chi_{i}$ has arbitarily small support. As noted previously the notion of an approximate solution is preserved under such a localisation. Now we may assume that we are working on a coordinate patch of $T^{\star}M$. Therefore we identify $M$ with $\R^{n}$ and $Y$ with $\R^{k}$. An element $x\in{}M$ will be denoted $x=(y,z)$ where $Y=\{z=0\}$. An element $\xi\in{}T_{x}^{\star}M$ will be written as $\xi=(\xi_{y},\xi_{z})$. Note that if $M$ is a compact manifold the localisation requirement in the spatial variables is trivially satisfied.

As we assume that $p(x,\xi,h)$ is smooth in $h$ we can write $p(x,\xi,h)=p_{0}(x,\xi)+hq(x,\xi,h)$. Now as $u$ is localised,
$$P(x,hD)\chi(x,hD)u=O_{L^{2}}(h)\Rightarrow{}P_{0}(x,hD)\chi(x,hD)u=O_{L^{2}}(h).$$
For the rest of this paper we will therefore assume that we are working with a symbol $p(x,\xi)$ independent of $h$.

Using the localisation assumption we are able to get a bound on $\norm{u}_{L^{p}}$ in terms of $\norm{u}_{L^{q}}$ where $q<p$. We have

$$u=\chi(x,hD)u+O(h^{\infty})$$
$$=h^{-n}\int{}K\left(x,\frac{x-y}{h}\right)u(y)dy+O(h^{\infty})$$
where
$$K(x,z)=\frac{1}{(2\pi)^{n}}\int\chi(x,\xi)e^{i<z,\xi>}d\xi.$$
A bound of $c(1-|x-z|)^{-N}$ is found for $|K(x,z)|$  by repeated integration by parts and by then applying Young's inequality the following estimate is obtained.

\begin{lem}\label{semiclassicalsobolev}
Suppose that a family $u=u(h)$ satisfies the localisation condition then for $1\leq{}q\leq{}p\leq\infty$
$$\norm{u}_{L^{p}}\lesssim{}h^{n(1/p-1/q)}\norm{u}_{L^{q}}+O(h^{\infty}).$$

\end{lem}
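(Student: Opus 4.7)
The plan is to follow the outline sketched in the excerpt: represent $u$ (modulo $O(h^{\infty})$) as an integral operator with a well-understood kernel, extract pointwise decay of that kernel by integration by parts, and then obtain the $L^{q}\to L^{p}$ bound from a Schur/Young-type estimate.

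First I would apply the localisation condition to write $u=\chi(x,hD)u+O_{\mathcal{S}}(h^{\infty})$, and rearrange the defining oscillatory integral as $\chi(x,hD)u(x)=h^{-n}\int K(x,(x-y)/h)u(y)\,dy$, with $K(x,z)=(2\pi)^{-n}\int \chi(x,\xi)e^{i\langle z,\xi\rangle}\,d\xi$ the inverse Fourier transform of $\chi(x,\cdot)$. Since $\chi$ is smooth and compactly supported in both variables, inserting the identity $(1+|z|^{2})^{-N}(1-\Delta_{\xi})^{N}e^{i\langle z,\xi\rangle}=e^{i\langle z,\xi\rangle}$ and integrating by parts $N$ times in $\xi$ gives, for every $N$, the pointwise bound $|K(x,z)|\leq C_{N}(1+|z|)^{-N}$, with the constant uniform in $x$ thanks to the compact $x$-support of $\chi$.

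Next I would convert this decay into an operator bound. The rescaled kernel $K_{h}(x,y)=h^{-n}K(x,(x-y)/h)$ satisfies, by the change of variables $w=(x-y)/h$,
$$\norm{K_{h}(x,\cdot)}_{L^{r}_{y}}=h^{n(1/r-1)}\norm{K(x,\cdot)}_{L^{r}_{w}},$$
and choosing $N>n/r$ in the decay estimate shows that $\norm{K(x,\cdot)}_{L^{r}_{w}}$ is bounded uniformly in $x$. The symmetric bound for $\norm{K_{h}(\cdot,y)}_{L^{r}_{x}}$ follows identically. Schur's test with off-diagonal exponents (equivalently, Minkowski's integral inequality combined with Young's inequality for convolutions) then gives, for $1\leq q\leq p\leq\infty$ and $1/r=1+1/p-1/q\in[0,1]$,
$$\norm{\chi(x,hD)u}_{L^{p}}\lesssim h^{n(1/r-1)}\norm{u}_{L^{q}}=h^{n(1/p-1/q)}\norm{u}_{L^{q}}.$$
The $O_{\mathcal{S}}(h^{\infty})$ remainder is Schwartz and so absorbed into any $L^{p}$ norm, yielding the stated estimate.

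I do not expect any real obstacle: the only non-routine step is the integration by parts producing the decay of $K$, which is standard once one notes that compactness of $\mathrm{supp}(\chi)$ makes all constants uniform in $x$. The hypothesis $1\leq q\leq p\leq\infty$ is exactly what is needed so that $1/r\in[0,1]$ and the Schur/Young bound applies; the endpoints $q=p$ (Schur $L^{r}=L^{1}$) and $(q,p)=(1,\infty)$ (pointwise kernel bound) correspond to the extreme cases of the same argument.
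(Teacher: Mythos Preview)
Your proposal is correct and follows essentially the same route as the paper: write $u=\chi(x,hD)u+O(h^{\infty})$ as an integral operator with kernel $h^{-n}K(x,(x-y)/h)$, use integration by parts in $\xi$ to get the decay $|K(x,z)|\lesssim(1+|z|)^{-N}$, and then apply Young's inequality (after dominating the $x$-dependent kernel by the convolution kernel $h^{-n}(1+|\cdot|/h)^{-N}$) to obtain the $L^{q}\to L^{p}$ bound with the correct power of $h$. Your write-up simply fills in the details the paper leaves implicit.
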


In a couple of places we will want to use this estimate over a submanifold rather than the full manifold. To do this we require localisation to hold if some variables are fixed.

\begin{lem}
If $u$ satisfies the localisation conditions then there exists some $\tilde{\chi}(y,{\xi_{y}})$ compactly supported such that 
$$R_{Y}u=\tilde{\chi}(y,hD_{y})R_{Y}u+O(h^{\infty})$$
where $R_{Y}$ is the restriction operator onto the submanifold $Y$.

\end{lem}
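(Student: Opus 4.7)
The plan is to choose $\tilde\chi$ as a cutoff near the projection of $\mathrm{supp}(\chi)$ into $(y,\xi_y)$-space, lift it to an $h$-symbol on all of $T^*M$ that is independent of $(z,\xi_z)$, and exploit the fact that the quantisation of such a lifted symbol commutes with $R_Y$ in a trivial way.

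First, denote by $\pi(x,\xi):=(y,\xi_y)$ the natural projection $T^*M\to T^*Y$ and pick $\tilde\chi(y,\xi_y)\in C_c^\infty(\R^{2k})$ equal to $1$ on a neighbourhood of the compact set $\pi(\mathrm{supp}(\chi))$. Define the lifted symbol $\tilde\chi^{\#}(x,\xi):=\tilde\chi(y,\xi_y)$ on $T^*M$ (one may multiply by a compactly supported bump in $(z,\xi_z)$ equal to $1$ on the $(z,\xi_z)$-projection of $\mathrm{supp}(\chi)$ to make $\tilde\chi^{\#}$ an honest $C_c^\infty$ symbol; this does not affect the argument). By construction $\tilde\chi^{\#}\equiv 1$ on a neighbourhood of $\mathrm{supp}(\chi)$.

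Second, apply the semiclassical composition expansion to $\tilde\chi^{\#}(x,hD)\,\chi(x,hD)$. Every term beyond the leading one is a product of derivatives of $\tilde\chi^{\#}$ with derivatives of $\chi$; since $\tilde\chi^{\#}$ is identically $1$ on a neighbourhood of $\mathrm{supp}(\chi)$, all such products vanish identically, and the leading term is $\tilde\chi^{\#}\cdot\chi=\chi$. Hence
$$\tilde\chi^{\#}(x,hD)\,\chi(x,hD)=\chi(x,hD)+O_{L^2\to L^2}(h^\infty).$$
Combining with the localisation $u=\chi(x,hD)u+O_{\mathcal{S}}(h^\infty)$ yields
$$\tilde\chi^{\#}(x,hD)\,u=u+O_{\mathcal{S}}(h^\infty).$$

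Third, because $\tilde\chi^{\#}$ is independent of $(z,\xi_z)$, performing the $\xi_z$-integration in the quantisation formula produces a Dirac mass in $z-z'$, so $\tilde\chi^{\#}(x,hD)$ acts trivially in the $z$-variable. Explicitly, for any suitable $w$,
$$\bigl[\tilde\chi^{\#}(x,hD)\,w\bigr](y,z)=\bigl[\tilde\chi(y,hD_y)\,w(\cdot,z)\bigr](y),$$
and in particular $R_Y\,\tilde\chi^{\#}(x,hD)\,w=\tilde\chi(y,hD_y)\,R_Y w$. Combining with the previous display gives $\tilde\chi(y,hD_y)\,R_Y u=R_Y u+O(h^\infty)$, which is the claim.

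The main subtlety is justifying the last commutation rigorously; this is done by writing the full oscillatory integral, performing the $\xi_z$ Fourier inversion first against $u(y',\cdot)$ for fixed $y'$, and only then restricting to $z=0$. The $O(h^\infty)$ Schwartz error survives restriction to $Y$ because the trace map is continuous on Schwartz space.
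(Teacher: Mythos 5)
Your argument is correct and essentially parallels the paper's: both pick a $(y,\xi_y)$-cutoff equal to $1$ over the $(y,\xi_y)$-projection of $\mathrm{supp}\,\chi$, and both reduce the statement to showing that inserting this cutoff changes nothing up to $O(h^\infty)$. The difference is one of bookkeeping: the paper applies non-stationary phase directly to the composed operator $(Id-\psi(y,hD_y))R_Y\chi(x,hD)$, whereas you split this into a composition-calculus computation on $T^*M$ followed by the exact commutation $R_Y\,\tilde\chi^{\#}(x,hD)=\tilde\chi(y,hD_y)\,R_Y$, which is a slightly cleaner route to the same non-stationary-phase content. One minor slip worth flagging: in your second paragraph you suggest multiplying $\tilde\chi^{\#}$ by a bump in $(z,\xi_z)$ to make it compactly supported and assert this ``does not affect the argument,'' but it would destroy precisely the $(z,\xi_z)$-independence you invoke in the third paragraph for the exact commutation; fortunately the bump is unnecessary, since $\tilde\chi(y,\xi_y)$ lifted to $T^*M$ is already a legitimate order-zero symbol in $S(1)$ and the composition calculus applies to $S(1)\times C^\infty_c$ products without requiring compact support of both factors.
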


\begin{proof}
First as $u(x)$ is localised we can replace $u(x)$ with $\chi(x,hD)u$. Let $\psi(y,\xi_{y})\in{}C_{c}^{\infty}(\R^{k}\times\R^{k})$ such that $\psi(y,{\xi_{y}})=1$ for all $(y,\xi_{y})$ such that $(y,z,{\xi_{y}},{\xi_{z}})\in\mbox{Supp}\chi(x,\xi)$ for some $(z,{\xi_{z}})$. As $\chi(x,\xi)=\psi(y,{\xi_{y}})\chi(x,\xi)$ repeated application of non-stationary phase gives

$$(Id-\psi(y,hD_{y}))R_{Y}\chi(x,hD)u=O(h^{\infty})$$
which gives
$$R_{Y}\chi(x,hD)u=\psi(y,hD_{y})R_{Y}\chi(x,hD)u+O(h^{\infty})$$
as required.

\end{proof}

 Using this localisation condition we can prove that when $p(x,\xi)$ is bounded away from zero the local contribution is small. From (\cite{koch}, Lemma 2.1) we have that if $|p(x,\xi)|\geq{1/C}$ on a local patch then we can invert $p(x,hD)$ up to order $h^{\infty}$. That is, choosing $\chi(x,\xi)$ supported on this patch, we can find some $q(x,hD)$ such that
 $$q(x,hD)p(x,hD)\chi(x,hD)=\chi(x,hD)+O_{L^{2}\rightarrow{}L^{2}}(h^{\infty})$$
 and
 $$p(x,hD)q(x,hD)\chi(x,hD)=\chi(x,hD)+O_{L^{2}\rightarrow{}L^{2}}(h^{\infty}).$$
 So if $p(x,hD)u=O_{L^{2}}(h)$ and $|p(x,\xi)|>1/C$ we can invert $p(x,hD)$ to get
$$\chi(x,hD)u=O_{L^{2}}(h).$$
Now using Lemma \ref{semiclassicalsobolev} to estimate $\norm{u}_{L^{\infty}}$ by $\norm{u}_{L^{2}}$ we have
\begin{equation}\norm{u}_{\infty}=O(h^{-(n-2)/2}).\label{goodinfinity}\end{equation}
To get the $L^{2}$ norm of the restriction of $u$ to $Y$ we use Lemma \ref{semiclassicalsobolev} again this time only in the $z$ coordinates. We have
\begin{equation}\norm{u(y,0)}_{L^{2}_{y}}\lesssim\norm{u(y,z)}_{L^{\infty}_{z}L^{2}_{y}}\lesssim{}h^{-\frac{n-k}{2}}\norm{u(y,z)}_{L^{2}_{z}L^{2}_{y}}.\label{goodL2}\end{equation}
So the $L^{2}$ norm of $u$ when restricted to a submanifold is $O(h^{-\frac{n-k-2}{2}})$. Interpolating between (\ref{goodinfinity}) and (\ref{goodL2}) gives us better $L^{p}$ estimates than those given by Theorem \ref{maintheorem}. Consequently we can ignore regions where $p(x,\xi)$ is bounded away from zero.

This reduces our problem to localising around points $(x_{0},\xi_{0})$ where $p(x_{0},\xi_{0})=0$. To proceed we need to place some non-degeneracy conditions on $p(x,\xi)$.
\begin{defin}\label{admissable}
A symbol $p(x,\xi)$ is admissible if it satisfies the following non-degeneracy conditions:
\begin{itemize}
\item[(A1)] for any pair $(x_{0},\xi_{0})$ such that $p(x_{0},\xi_{0})=0$, $\partial_{\xi}p(x_{0},\xi_{0})\neq{}0$
\item[(A2)] the second fundamental form on $\{\xi\mid{}p(x_{0},\xi)=0\}\subset{}T_{x_{0}}^{\star}M$ is positive definite.
\end{itemize}
\end{defin}
The first condition will be used to convert this problem into one regarding evolution operators. The second condition is needed for some later stationary phase estimates. The main result of this paper is below.

\begin{thm}\label{maintheorem}
Let $(M,g)$ be a smooth Riemannian manifold with no boundary and let $Y$ be a smooth embedded submanifold with dimension $k$. Let $u(h)$ be a family of $L^{2}$ normalised functions that satisfy $Pu=O_{L^{2}}(h)$ for $P$ a semiclassical operator with symbol $p(x,\xi)$. Assume further that $u$ satisfies the localisation property and that the symbol $p(x,\xi)$ is admissible. Then the $L^{p}$ norms restricted to $Y$ are:

$$\norm{u}_{L^{p}(Y)}\lesssim{}h^{-\delta(n,k,p)}$$
\begin{equation}\delta(n,n-1,p)=
\begin{cases}
\frac{n-1}{2}-\frac{n-1}{p},&\frac{2n}{n-1}\leq{}p\leq\infty\\
\frac{n-1}{4}-\frac{n-2}{2p},&2\leq{}p\leq\frac{2n}{n-1}
\end{cases}
\end{equation}
and for $k\neq{}n-1$
\begin{equation}
\delta(n,k,p)=
\begin{cases}
\frac{n-1}{2}-\frac{k}{p},\:{}2\leq{}p\leq\infty,\:(k,p)\neq{}(n-2,2).
\end{cases}
\end{equation}
For $k=n-2$ the $L^{2}$ estimate is
$$\norm{u}_{L^{2}(Y)}\lesssim{}h^{-1/2}\log^{1/2}(1/h).$$

\end{thm}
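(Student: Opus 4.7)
The strategy is to reduce the restriction estimate to a dispersive bound for a semiclassical evolution operator, and then to analyse its kernel by stationary phase combined with a $TT^*$/Strichartz argument, in the spirit of \cite{burq} and \cite{koch}. By the reductions preceding the theorem, I may localise to a small coordinate patch near a characteristic point $(x_0,\xi_0)$ of $p$ and pick coordinates $x=(y,z)$ with $Y=\{z=0\}$. Using (A1), choose a normal coordinate $x_n$ to $Y$ with $\partial_{\xi_n} p(x_0,\xi_0)\neq 0$ (away from bicharacteristic tangencies to $Y$, which must be handled separately). Malgrange preparation then factors $p=e\cdot(\xi_n-a(x,\xi'))$ with $e$ elliptic and $a$ real, and inverting the elliptic factor via Lemma~2.1 of \cite{koch} reduces $P$ to the normal form $hD_{x_n}-a(x,hD_{x'})$; the curvature assumption (A2) becomes strict convexity of the graph $\{\tau=a(x_0,\xi')\}$.

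Let $U(s)$ denote the semiclassical propagator for this evolution. Duhamel's formula together with $Pu=O_{L^2}(h)$ gives $u(x_n,\cdot)=U(x_n)f+O_{L^2}(x_n)$ where $f=u(0,\cdot)$ has $L^2$ norm of order one. The restriction $R_Y u$ thus factors through a semiclassical FIO applied to $f$, and by duality the bound $\norm{R_Y U(x_n) f}_{L^p(Y)}\lesssim h^{-\delta}\norm{f}_{L^2}$ reduces to an estimate on the Schwartz kernel $K(y,y';x_n,s)$ of $R_Y U(x_n) U(s)^* R_Y^*$. This kernel is a semiclassical oscillatory integral whose phase has non-degenerate critical points by (A2); stationary phase then yields a dispersive bound of the form $|K|\lesssim h^{-N}(1+|x_n-s|/h)^{-M}$ with $N,M$ determined by the codimension $n-k$ and by the curvature of the characteristic variety.

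Inserting this kernel bound into Young's inequality produces the claimed $\delta(n,k,p)$ in the Sogge-type regime $p\geq 2n/(n-1)$ (for hypersurfaces) and at $p=\infty$ in the smaller-$k$ range. For the remaining $p$, the abstract Strichartz machine of Keel--Tao \cite{keel} converts the combined $L^2$-conservation and dispersive kernel estimate into the sharp mixed-norm bound, and interpolation yields the piecewise formula. The hard part is the endpoint $(k,p)=(n-2,2)$, which lies exactly at the critical Strichartz scaling so that the unmodified Keel--Tao theorem fails. Here I would apply the extension announced in the abstract: a dyadic Littlewood--Paley decomposition in $x_n$ produces a sum that barely diverges, and careful bookkeeping gives only a $\log(1/h)$ loss, yielding the stated $h^{-1/2}\log^{1/2}(1/h)$ bound. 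Proving this extension of the abstract Strichartz estimates, and ensuring the whole programme fits together globally after patching local pieces, is the technical core of the argument.
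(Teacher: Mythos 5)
Your overall plan (localise, factorise the symbol via (A1), pass to a semiclassical evolution, then run a $TT^*$/Strichartz argument for $W(t)=R_Y U(t)$) is the paper's strategy, and the dispersive $L^1\to L^\infty$ bound is obtained exactly as you describe. But there are two substantive gaps.

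First, you claim that a single pointwise kernel estimate $|K|\lesssim h^{-N}(1+|x_n-s|/h)^{-M}$, fed through Young, gives the needed $L^2\to L^2$ bound on $W(t)W^*(s)$. It does not. Young applied to a kernel of size $h^{-(n-1)/2}(h+|t-s|)^{-(n-1)/2}$ over $\R^{k-1}$ loses too much; the correct bound in the paper is $\norm{W(t)W^*(s)}_{L^2\to L^2}\lesssim h^{-(n-k)/2}(h+|t-s|)^{-(n-k)/2}$, which is strictly better than anything a pointwise kernel bound plus Young can deliver. Proving it requires exploiting the \emph{oscillations} of the kernel: one splits $W(t)W^*(s)$ according to whether $|y'-v'|\lesssim|t-s|$ or not, uses non-stationary phase in the far region, and in the near region applies a second $TT^*$ step to the kernel $\widetilde W(t,s,v',w')=\int W_1\overline{W}_1\,dy'$, with delicate derivative bounds (Lemma~\ref{derivativebounds}) on the stationary-phase amplitude controlling the $y'$-oscillation. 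This is the technical core of the proof and is missing from your sketch. Relatedly, the extension of Keel--Tao that the abstract advertises is precisely about allowing this decaying (non-unitary) $L^2$ bound on $W(t)W^*(s)$ in place of mass conservation --- not a Littlewood--Paley refinement. The paper's extended Strichartz proposition interpolates between the $L^1\to L^\infty$ and the decaying $L^2\to L^2$ bounds on $W(t)W^*(s)$, applies Hardy--Littlewood--Sobolev in $t,s$, and reads off the governing equation relating $(r,p)$ to $(\sigma_\infty,\sigma_2)$. The $\log$ loss at $(k,p)=(n-2,2)$ comes from directly estimating the bilinear form and observing $\int_0^{C/h}(1+\sigma)^{-1}d\sigma\lesssim\log(1/h)$, not from a dyadic decomposition. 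Your Littlewood--Paley route is not what the paper does, and you have not carried it out, so as written it is an unsubstantiated assertion.

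Second, the case split is not quite what you describe. You propose choosing a coordinate \emph{normal} to $Y$ as time, deferring "bicharacteristic tangencies" to a separate argument. The paper's division is the reverse: the \emph{easy} case is $\partial_{\xi_z}p(x_0,\xi_0)\neq 0$, where time can be taken normal to $Y$, $L^2$ conservation on time slices plus semiclassical Sobolev already gives $h^{-(n-k-1)/2}$, and no Strichartz is needed. The \emph{hard} case is $\partial_{\xi_z}p(x_0,\xi_0)=0$, forcing the time variable to be tangent to $Y$, and it is there that $W(t)=R_YU(t)$ fails to be unitary and the extended Strichartz machinery is required. As written your setup collapses the two cases and would leave you trying to restrict an $L^2$-unitary flow in the easy case with the heavy machinery, while lacking the key $L^2$ kernel decay in the hard case.
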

\begin{remark}
Apart from the log loss in the $(k,p)=(n-2,2)$ case these estimates are known to be sharp for Laplacian eigenfunctions as shown by Burq, G\'{e}rard and Tzetkov \cite{burq}.
\end{remark}

In proving the semiclassical version for the full manifold estimates both Koch, Tataru and Zworski \cite{koch} and Burq, G\'{e}rard and Tzvetkov \cite{burq2} used the assumption (A1) along with the implicit function theorem to write $p(x,\xi)$ as
\begin{equation}p(x,\xi)=e(x,\xi)(\xi_{1}-a(x,\xi)).\label{implicit}\end{equation}
Then by using $x_{1}$ as a time variable, $t$, they reduced the problem to studying the evolution equation
$$(hD_{t}-a(t,x',hD_{x'}))u=0\label{assevol2}.$$
An approximate propagator $U(t)$ for (\ref{assevol2}) can be written down as a Fourier Integral Operator. By proving a decay estimate on $\norm{U(s)^{\star}U(t)}_{L^{1}\to{}L^{\infty}}$ they were able to use Strichartz estimates to determine the mixed ``space-time'' norm. Using the Strichartz estimate for the pair $(p,p)$ they obtained an estimate on the $L^{p}$ norm for $p=\frac{2(n+1)}{n-1}$. From the localisation assumption and Duhamels principle they determined the $L^{\infty}$ estimate. All other $L^{p}$ estimates were obtained by interpolation between these points and the trivial $L^{2}$ bound.

We follow a similar procedure to find estimates for $\norm{u}_{L^{p}(Y)}$. As the $L^{\infty}$ estimate on the submanifold must be the same as over the full manifold we only need to find the $L^{2}$ norm and the $L^{p}$ norm given by the appropriate Strichartz estimates. 

We cannot however use this method immediately, as we do not know whether the time variable $t=x_{1}$ determined by (\ref{implicit}) remains a valid co-ordinate when restricted to the submanifold $Y$. For example, $t$ could be constant on $Y$. However the localisation property comes to our aid at this point and allows us to prove the required estimates (or better) when $t$ is constant on $Y$. This provides a natural division of the problem into two cases. In case one the time variable is constant on $Y$ and, given the symbol factorisation, the proof of Theorem \ref{maintheorem} follows easily from conservation of energy and localisation. In the second case, where time is a coordinate when restricted to $Y$ we need to use Strichartz estimates. Although the usual form of Strichartz estimates do not fit this problem we are able to modify the abstract Strichartz estimates for our use.

The usual statement of Strichartz estimates assumes $L^{2}$ boundedness. In this case our family of operators $W(t)$ will be determined from the the full evolution operator by a restriction of some spatial variables and therefore is not necessarily $L^{2}$ bounded. However in the Keel-Tao \cite{keel} picture of Strichartz estimates which we will use this unitarity does not matter. We need only to have a bound from which to  interpolate. Obviously having a different interpolation endpoint will somewhat change the relationship between the Strichartz pair $(r,p)$ and $n$.

As we have shown that areas where $|p(x,\xi)|>1/C$ make negligible contributions we can study $p(x,\xi)$ around the points $(x_{0},\xi_{0})$ where $p(x_{0},\xi_{0})=0$.

In Section \ref{symbolfactorisation} we will factorise the symbol to create an evolution equation and show that if $\partial_{\xi_{z}}p(x_{0},\xi_{0})\neq{}0$ the localisation condition is enough to prove Theorem \ref{maintheorem}. Section \ref{adjustedstrichartz} gives the necessary extension of the abstract Strichartz estimates and governing equation for the Strichartz pairs $(r,p)$. Section \ref{approxpropagator} uses a Fourier Integral Operator to represent the evolution operator $U(t)$ and obtains estimates for the restriction of $U(t)$ to the submanifold. Section \ref{theoremproof} uses the estimates from Section 4 with the adjusted Strichartz estimate to prove Theorem \ref{maintheorem}.

\section{Symbol Factorisation}\label{symbolfactorisation}
By assumption (A1) we have that when $p(x,\xi)=0$, then  $\frac{\partial{}p}{\partial\xi_{i}}\neq{}0$ for some $i$. By the implicit function theorem we can solve the equation $\xi_{i}=a(x,\xi')$ on $\{\xi\mid{}p(x_{0},\xi)=0\}$ and, on the support of $\chi$, we have
$$p(x,\xi)=e(x,\xi)(\xi_{i}-a(x,\xi'))$$
where $e(x_{0},\xi_{0})\neq{}0$. Now, as $u$ is a quasimode, 
$$e(x,hD)(D_{x_{i}}-a(x,hD_{x'}))u=O_{L^{2}}(h).$$
As $e(x_{0},\xi_{0})\neq{}0$ we can, locally, approximately invert $e(x,hD)$ so now we have that,
$$(hD_{x_{i}}-a(x,hD_{x'}))u=O_{L^{2}}(h).$$
We the study the associated homogeneus evolution equation
$$(hD_{x_{i}}+a(x,hD_{x'}))u=0$$
where the $x_{i}$ space variable is thought of as the ``time'' variable. If we can understand the properties of the evolution operator $U(t)$ we will then be able to use Duhamel's principle to obtain estimates for $u$.

As we are estimating the restriction of $u$ to a submanifold we want to study a restricted for of $U(t)$ defined by
$$W(t)=R_{Y}\circ{}U(t).$$
It is now important to determine whether our time variable is a ``$z$'' variable (ie $Y$ is contained in a single time slice) or a ``$y$'' variable (ie $Y$ is transverse to time slices). To deal with this we will split the proof of Theorem \ref{maintheorem} into two cases. Case 1, where $\partial_{\xi_{z}}p(x_{0},\xi_{0})\neq{}0$ (the easy case) is proved below. Case 2, $\partial_{\xi_{z}}p(x_{0},\xi_{0})={}0$ (the harder case) requires the use of abstract Strichartz estimates that allow for non-unitary energy bounds. 

\begin{proof}[Proof of Theorem \ref{maintheorem} in Case 1]
We will prove that if $\partial_{\xi_{z}}p(x_{0},\xi_{0})\neq{}0$ the $L^{p}$ estimates for $u$ are at least as good (and possibly better) than those given by Theorem \ref{maintheorem}. This assumption implies $\frac{\partial{}p}{\partial{}\xi_{z_{i}}}\neq{}0$ for some $i$; we assume $i=1$. We can therefore factorise the symbol as
$$p(x,\xi)=e(x,\xi)(\xi_{z_{1}}-a(x,{\xi_{y}},{\xi_{z}}'))$$
where $z=(z_{1},z')$. As $Pu=O_{L^{2}}(h)$
and $e(x_{0},\xi_{0})\neq{}0$ we can conclude that
\begin{equation}(hD_{{z_{1}}}-a(x,hD_{y},hD_{z'}))u=hf(x)\label{ntevol}\end{equation}
where $\norm{f}_{L^{2}}=O(1)$.
The associated homogeneous evolution equation is
\begin{equation}(hD_{z_{1}}-a(x,hD_{y},hD_{z'}))u=0.\label{homevol}\end{equation}
Now allowing the variable $z_{1}$ to act as a time variable we can find a propagator $U(t)$ that gives a solution for (\ref{homevol}). The solution operator $U(t)$ will be unitary on $L^{2}$.

Using Duhamel's principle and denoting $x'=(y,z')$ we write

\begin{equation}u(z_{1},x')=U(z_{1})u(0,x')+{i}\int_{0}^{z_{1}}U(z_{1}-s)f(s,x')ds.\label{duhamel}\end{equation}
Combining (\ref{duhamel}) with the conservation of $L^{2}$ mass for the homogeneous problem we have that if $u$ is $L^{2}$ normalised the the $L^{2}$ mass of $u$ on the hypersurface $H=\{x|z_{1}=0\}$ is of order one. We now use the localisation assumption along with semiclassical Sobolev estimates (Lemma \ref{semiclassicalsobolev}) to obtain an estimate for the $L^{2}$ on the submanifold $Y$.

$$\norm{u(y,0)}_{L^{2}_{y}}\leq\norm{u(y,0,z')}_{L^{\infty}_{z'}L^{2}_{y}}\lesssim{}h^{\frac{n-k-1}{2}}\norm{u(y,0,z')}_{L^{2}_{z'}L^{2}_{y}}$$
$$\lesssim{}h^{\frac{n-k-1}{2}}.$$
Which (apart from the hypersurface case where it is better) is the estimate we are looking for. 
\end{proof}

Therefore without loss of generality we will, for the rest of this paper, assume $\partial_{\xi_{z}}p(x_{0},\xi_{0})={}0$ (case 2) which by (A1) from Definition \ref{admissable} implies $\partial_{\xi_{y}}p(x_{0},\xi_{0})\neq{}0$. To prove the estimate in this case we use the same kind of symbol factorisation but this time $y_{1}$ will act as the time variable.

We will use a Fourier Integral Operator representation of $U(t)$ to obtain $L^{2}\to{}L^{2}$ and $L^{1}\to{}L^{\infty}$ bounds for $W(t)W^{\star}(s)$. We can then use the Strichartz estimates to get an estimate on
$$\left(\int\norm{W(t)u}_{p}^{r}dt\right)^{1/r}$$
 where $p=r$. However as we will be fixing some of the spatial variable at zero we cannot guarantee that $W(t)$ will still be unitary. To deal with this we need to make an adjustment to the abstract Strichartz estimates.

\section{Extended Strichartz estimates}\label{adjustedstrichartz}

Working with the Keel-Tao \cite{keel} formalism we have a family of operators $W(t)$ such that
$$W(t):H\rightarrow{}L^{2}(X)$$ for some Hilbert space $H$ and measure space $X$. When we apply this we will have $H=L^{2}(\R^{n-1})$ and $X=\R^{k-1}$. Note that $\R^{n-1}$ is a time slice in $M$ and $X=\R^{k-1}$ is a time slice in $Y$. The Strichartz assumptions modified to include a semiclassical parameter $h$ (see Koch-Tataru-Zworski \cite{koch} and Burq-G\'{e}rard-Tzvetkov \cite{burq2}) are that,
$$\norm{W(t)f}_{L^{2}}\leq{}C\norm{f}_{H}$$
and
$$\norm{W(t)W^{\star}(s)f}_{L^{\infty}(X)}\leq{}h^{-\mu}(h+|t-s|)^{-\sigma}\norm{f}_{1}.$$
This gives a mixed norm estimate of
$$\left(\int\norm{W(t)f}_{L^{p}}^{r}dt\right)^{1/r}\lesssim{}h^{-\frac{\mu}{r\sigma}}\norm{f}_{H}$$
where 
$$\frac{2}{r}+\frac{2\sigma}{p}=\sigma$$
and $(r,p)\neq(2,\infty)$.

We adjust these estimates by allowing the $L^{2}$ norm of $W(t)W^{\star}(s)f$ to have a bound of a similar form to the $L^{\infty}$ bound.

\begin{prop}
Let $W(t)$, $t\in\R$ be a family of operators $W(t):H\rightarrow{}L^{2}(X)$, where $H$ is a Hilbert space and $(X,dx)$ is a measure space. Assume that  $W(t)$ satisfies the estimates
\begin{itemize}
\item For all $t,s\in\R$ and $f\in{}L^{1}(X)$
\begin{equation}\norm{W(t)W^{\star}(s)f}_{L^{\infty}({X})}\lesssim{}h^{-\mu_{\infty}}(h+|t-s|)^{-\sigma_{\infty}}\norm{f}_{L^{1}(X)}\label{energy}\end{equation}

\item For all $t,s,\in\R$ and $f\in{}L^{2}(X)$
\begin{equation}\norm{W(t)W^{\star}(s)f}_{L^{2}(X)}\lesssim{}h^{-\mu_{2}}(h+|t-s|)^{-\sigma_{2}}\norm{f}_{L^{2}(X)}\label{decay}\end{equation}
\end{itemize}
then
\begin{equation}\left(\int\norm{W(t)f}_{L^{p}}^{r}dt\right)^{1/r}\lesssim{}h^{-\left(\frac{\mu_{\infty}-\mu_{2}}{r(\sigma_{\infty}-\sigma_{2})}+\frac{\sigma_{\infty}\mu_{2}-\sigma_{2}\mu_{\infty}}{2(\sigma_{\infty}-\sigma_{2})}\right)}\norm{f}_{H}\label{sticest}\end{equation}
for pairs of $(r,p)$, $2<r\leq\infty$, $2\leq{}p{}\leq\infty$ such that
\begin{equation}\frac{2}{r}+\frac{1}{p}(\sigma_{\infty}-\sigma_{2})=\frac{\sigma_{\infty}}{2}.\label{goveq}\end{equation}
\end{prop}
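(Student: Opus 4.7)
The plan is to adapt the $TT^{\ast}$ argument of Keel--Tao to the present non-unitary semiclassical setting. First I would pass to the dual bilinear formulation: setting $Tf(t) = W(t)f$, the estimate $\|Tf\|_{L^{r}_{t}L^{p}_{x}} \lesssim h^{-E}\|f\|_{H}$ is equivalent, by $TT^{\ast}$ duality, to bounding the bilinear form $B(F,G) = \iint \langle W(t)W^{\ast}(s) F(s),\, G(t)\rangle\, dt\, ds$ by $h^{-2E}\|F\|_{L^{r'}_{t}L^{p'}_{x}}\|G\|_{L^{r'}_{t}L^{p'}_{x}}$. Here $E$ denotes the exponent displayed in the statement.

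Second, I would apply Riesz--Thorin complex interpolation between the two hypothesised operator bounds (treating the $(h+|t-s|)$ factor as a pointwise constant in the spatial variable) to obtain, for each $p \in [2,\infty]$,
\[
\|W(t)W^{\ast}(s) f\|_{L^{p}(X)} \lesssim h^{-\mu(p)}(h+|t-s|)^{-\sigma(p)}\|f\|_{L^{p'}(X)},
\]
with $\mu(p) = (1-2/p)\mu_{\infty} + (2/p)\mu_{2}$ and $\sigma(p) = (1-2/p)\sigma_{\infty} + (2/p)\sigma_{2}$.

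Third, I would substitute this interpolated bound into $B$ and apply the one-dimensional Hardy--Littlewood--Sobolev inequality (symmetric case) to the resulting time kernel. Writing $F_{0}(s) = \|F(s)\|_{L^{p'}_{x}}$ and $G_{0}(t) = \|G(t)\|_{L^{p'}_{x}}$, HLS on $\R$ bounds $\iint |t-s|^{-\lambda} F_{0}(s)G_{0}(t)\, dt\, ds$ by $\|F_{0}\|_{L^{r'}}\|G_{0}\|_{L^{r'}}$ exactly when $\lambda = 2/r$. The admissibility condition $\sigma(p) = 2/r$ therefore emerges as the governing curve, and rewriting it in $p, r, \sigma_{\infty}, \sigma_{2}$ reproduces the governing equation of the proposition. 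Since HLS contributes no additional power of $h$ on this curve, we obtain $\|T\|^{2} \lesssim h^{-\mu(p)}$; a direct algebraic simplification using $1 - 2/p = (2/r - \sigma_{2})/(\sigma_{\infty} - \sigma_{2})$ converts $\mu(p)/2$ into the displayed closed form for $E$.

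The main obstacle is the endpoint behaviour of HLS, which requires the strict inequality $0 < \sigma(p) < 1$; this corresponds to $r > 2$, consistent with the proposition's exclusion of the case $r = 2$. A sharp treatment at the missing endpoint would require the Keel--Tao bilinear real-interpolation refinement across dyadic scales of $|t-s|$, but the elementary HLS argument sketched above should suffice on the stated range $2 < r \leq \infty$. A minor auxiliary point is that the semiclassical regularisation $(h+|t-s|)^{-\sigma(p)}$ is dominated pointwise by $|t-s|^{-\sigma(p)}$, so replacing it before applying HLS only improves the estimate and introduces no obstruction.
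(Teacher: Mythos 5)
Your argument is correct and follows essentially the same route as the paper: pass to the bilinear $TT^{\star}$ formulation, interpolate the $L^{1}\to L^{\infty}$ and $L^{2}\to L^{2}$ kernel bounds to get an $L^{p'}\to L^{p}$ bound with exponents $\mu(p),\sigma(p)$, dominate $(h+|t-s|)^{-\sigma(p)}$ by $|t-s|^{-\sigma(p)}$, and close with one-dimensional Hardy--Littlewood--Sobolev on the scaling line $\sigma(p)=2/r$; your final algebra for the $h$-exponent also matches (\ref{sticest}) exactly. One small caveat: your condition $\sigma(p)=2/r$ unwinds to $\tfrac{2}{r}+\tfrac{2}{p}(\sigma_{\infty}-\sigma_{2})=\sigma_{\infty}$, which is what the paper's derivation and its Section~4 application ($\tfrac{1}{r}+\tfrac{k-1}{2p}=\tfrac{n-1}{4}$) actually use, and is what is consistent with the displayed $h$-power; the equation (\ref{goveq}) as printed in the proposition appears to contain a transcription error (a stray factor of $2$), so your claim that $\sigma(p)=2/r$ ``reproduces the governing equation of the proposition'' verbatim should be read as reproducing the corrected form.
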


\begin{proof}
Following Keel-Tao \cite{keel} we will prove the bilinear form of the estimate
\begin{equation}\left|\iint\left<(W^{\star}(s)F(s),W^{\star}(t)G(t)\right>dsdt\right|\lesssim\norm{F}_{L^{r'}_{s}L^{p'}_{x}}\norm{G}_{L^{r'}_{t}L^{p'}_{s}}.\label{bilinearform}\end{equation}
Converting (\ref{energy}) and (\ref{decay}) into bilinear forms we have the estimates
$$\left|\left<W^{\star}(s)F(s),W^{\star}(t)G(t)\right>\right|\lesssim{}h^{-\mu_{\infty}}(h+|t-s|)^{-\sigma_{\infty}}\norm{F(s)}_{L^{1}}\norm{G(t)}_{L^{1}}$$
and
$$\left|\left<W^{\star}(s)F(s),W^{\star(t)}G(t)\right>\right|\lesssim{}h^{-\mu_{2}}(h+|t-s|)^{-\sigma_{2}}\norm{F(s)}_{L^{2}}\norm{G(t)}_{L^{2}}.$$
Interpolation between these estimates yields,
$$|\left<W^{\star}(s)F(s),W^{\star}(t)G(t)\right>|\leq{}h^{-\beta(p,\mu_{1},\mu_{2})}(h+|t-s|)^{-\beta(p,\sigma_{1},\sigma_{2})}\norm{F(s)}_{L^{p'}}\norm{G(t)}_{L^{p'}}$$
where $$\beta(
p,\sigma_{1},\sigma_{2})=\frac{2(\sigma_{2}-\sigma_{\infty})}{p}+\sigma_{\infty}.$$

We now use Hardy-Littlewood-Sobolev for the $t$ and $s$ integrations. This will give us the equation governing the relationship between $r$ and $p$. We have that
$$\int\int\frac{f(x)g(y)}{|x-y|^{\gamma}}dxdy\leq{}\norm{f}_{L^{q_{1}}}\norm{g}_{L^{q_{2}}}$$
for
$0<\gamma<n$, and
$$\frac{1}{q'_{1}}+\frac{1}{q'_{2}}=\frac{\gamma}{n}.$$
In this case we set $q_{1}=q_{2}=r'$ and $$\gamma=\frac{2(\sigma_{2}-\sigma_{\infty})}{p}+\sigma_{\infty}$$
so Hardy-Littlewood-Sobolev gives us
$$\frac{2}{r}=\frac{2(\sigma_{2}-\sigma_{\infty})}{p}+\sigma_{\infty}.$$
Rearranging this gives
$$\frac{2}{r}+\frac{1}{p}(\sigma_{\infty}-\sigma_{2})=\frac{\sigma_{\infty}}{2}$$
as the governing equation for these Strichartz estimates. Note that when $\sigma_{2}=\mu_{2}=0$ and $\sigma_{\infty}=\sigma$ this is just the original abstract Strichartz estimates governing equation
$$\frac{2}{r}+\frac{2\sigma}{p}=\sigma.$$
 Now we need to substitute the governing equation into the $h$ index. Doing this and working through the algebra we get that
$$\left(\int\norm{W(t)f}_{L^{p}}^{r}dt\right)^{1/r}\leq{}h^{-\left(\frac{\mu_{\infty}-\mu_{2}}{r(\sigma_{\infty}-\sigma_{2})}+\frac{\sigma_{\infty}\mu_{2}-\sigma_{2}\mu_{\infty}}{2(\sigma_{\infty}-\sigma_{2})}\right)}.$$
\end{proof}
Note that this simplifies considerably when $\mu_{1}=\sigma_{1}$ and $\mu_{2}=\sigma_{2}$, to become
$$\left(\int\norm{W(t)f}_{L^{q}}^{r}dt\right)^{1/r}\leq{}h^{-1/r}.$$

\begin{remark}
It is of course possible to further generalise these estimates by assuming $L^{q}$ bounds on $W(t)W^{\star}(s)$ for some $(q_{0},q_{1})$ rather than the usual $(2,\infty)$.
\end{remark}

\section{Approximate Propagator}\label{approxpropagator}

\begin{prop}\label{FIO}

Suppose $U(t):L^{2}(\R^{d})\rightarrow{}L^{2}(\R^{d})$ satisfies 
$$hD_{t}U(t)+A(t)U(t)=0,\quad{}U(0)=Id$$
where A(t) is a pseudodifferential operator such that the symbol principal symbol of $A(t)$ is real and has no dependence on $h$. Then there exists some $t_{0}>0$ independent of $h$ such that for $0\leq{}t\leq{}t_{0}$
$$U(t)u(\bar{x})=\frac{1}{(2\pi{}h)^{d}}\int\int{}e^{\frac{i}{h}(\phi(t,\bar{x},\eta)-w\cdot\eta)}b(t,\bar{x},\eta,h)u(w)dwd\eta+E(t)u(\bar{x})$$
where
$$\partial_{t}\phi(t,\bar{x},\eta)+a_{t}(\bar{x},\partial_{\bar{x}}\phi(t,\bar{x},\eta))=0,\quad{}\phi(0,\bar{x},\eta)=\bar{x}\cdot\eta$$
$$b(t,\bar{x},\eta,h)\in{}C^{\infty}_{c}(\R\times{}T^{\star}\R^{d}\times{}\R)\quad{}E(t)=O(h^{\infty}):S'\rightarrow{}S$$

\end{prop}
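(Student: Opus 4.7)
The plan is to construct $U(t)$ by the standard WKB / geometric optics method. I would make the ansatz
$$\tilde{U}(t)u(\bar{x}) = \frac{1}{(2\pi h)^{d}}\int\int{}e^{\frac{i}{h}(\phi(t,\bar{x},\eta)-w\cdot\eta)}b(t,\bar{x},\eta,h)u(w)\,dw\,d\eta$$
with $b \sim b_{0}+hb_{1}+h^{2}b_{2}+\cdots$ a formal series in $h$, apply $hD_{t}+A(t)$, and match powers of $h$ using the composition expansion of a pseudodifferential operator acting on an oscillatory wavefunction. The leading order in $h$ enforces the eikonal equation $\partial_{t}\phi + a_{t}(\bar{x},\partial_{\bar{x}}\phi)=0$ on the support of $b$, and each successive order produces a first-order linear transport equation for $b_{j}$ along the Hamilton flow of the principal symbol $a_{t}$.

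First I would solve the eikonal equation by Hamilton's method of characteristics, integrating the bicharacteristic system $\dot{\bar{x}}=\partial_{\xi}a_{t}$, $\dot{\xi}=-\partial_{\bar{x}}a_{t}$ with initial data $(\bar{x}(0),\xi(0))=(y,\eta)$. At $t=0$ the generating function $\bar{x}\cdot\eta$ has mixed Hessian $\partial^{2}\phi/\partial\bar{x}\partial\eta = I$, which is non-degenerate, so by continuity the map $y\mapsto\bar{x}(t)$ remains a diffeomorphism on some interval $[0,t_{0}]$ whose length depends only on derivatives of $a_{t}$ and is therefore independent of $h$. On this interval $\phi$ is determined smoothly by the classical Hamilton--Jacobi formula (the action integrated along characteristics).

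Next I would solve the transport equations recursively. The $b_{0}$ equation has the form $(\partial_{t} + v\cdot\nabla_{\bar{x}})b_{0} + c\, b_{0} = 0$ with $b_{0}(0,\bar{x},\eta)=1$, which is an ODE along characteristics and admits a smooth solution on $[0,t_{0}]$; the higher $b_{j}$ satisfy inhomogeneous versions of the same equation with source built from $b_{0},\ldots,b_{j-1}$. Because the data of the original quasimode is localised by a cutoff $\chi\in C_{c}^{\infty}$, I may multiply each $b_{j}$ by a compactly supported cutoff in $(\bar{x},\eta)$ slightly larger than $\mathrm{supp}\,\chi$ transported by the Hamilton flow, without changing $\tilde{U}(t)u$ modulo $O(h^{\infty})$. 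A Borel summation then yields $b(t,\bar{x},\eta,h)\in C_{c}^{\infty}$ realising the asymptotic series. With $b$ so chosen, $(hD_{t}+A(t))\tilde{U}(t) = O_{L^{2}\to L^{2}}(h^{\infty})$ and $\tilde{U}(0) = \mathrm{Id}$ modulo $O(h^{\infty})$, so Duhamel's formula together with the $L^{2}$ energy estimate for $U(t)$ (valid because the principal symbol of $A(t)$ is real) gives $E(t) := U(t)-\tilde{U}(t) = O_{L^{2}\to L^{2}}(h^{\infty})$. The upgrade to $\mathcal{S}'\to\mathcal{S}$ smoothing follows from the compact support of $b$ in $(\bar{x},\eta)$ and repeated integration by parts in $w$ and $\eta$.

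The main technical obstacle will be organising the symbol expansion of $A(t)(e^{i\phi/h}b)$: one must expand this into an asymptotic series in $h$ whose coefficients are polynomial in derivatives of $\phi$ and $b$, and verify that at each order the result is exactly the transport operator applied to the appropriate $b_{j}$. This is routine FIO calculus, but care is required to confirm that the non-degeneracy of the mixed Hessian of $\phi$ persists throughout $[0,t_{0}]$, since otherwise the stationary phase / symbol composition breaks down and caustics would force a decomposition into multiple phase functions.
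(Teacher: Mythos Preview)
Your proposal is correct and is exactly the standard parametrix construction the paper invokes; the paper itself gives no details beyond ``This is in fact the normal parametrix construction yielding the eikonal equation for the phase function'' and a citation to Evans--Zworski, so your WKB ansatz, eikonal/transport hierarchy, Borel summation, and Duhamel comparison are precisely what is meant.
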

\begin{proof}
This is in fact the normal parametrix construction yielding the eikonal equation for the phase function. See \cite{evans} Section 10.2.
\end{proof}

Similar to the proof in case one we will use symbol factorisation to obtain
$$p(x,\xi)=e(x,\xi)(\xi_{y_{1}}-a(x,\xi')),$$
where $\xi'=(\xi'_{y},\xi_{z})$ and study the evolution equatioon
$$hD_{t}-a(x,hD_{y'},hD_{z})=0$$
(see Section \ref{theoremproof}). Here we are using $y_{1}$ as the time variable thus our coordinate $x$ is now decomposed as $x=(t,y',z)$. In the notation of Proposition \ref{FIO}, $A(t)=a(x,hD_{y'},hD_{z})$ and $\bar{x}=(y',z)$.

As on $\{\xi\mid{}p(x_{0},\xi)=0\}$, $\xi_{y_{1}}=a(x,\xi')$ the second fundamental form $h_{ij}$ is given by
$$h_{ij}=-\frac{\partial^{2}a}{\partial_{\xi'_{i}}\partial_{\xi'_{j}}}.$$
The non-degeneracy condition (A2) implies $h_{ij}$ is a positive definite matrix, therefore on a small enough patch $\partial^{2}_{\eta}a$ (where $\eta$ is the dual variable to $\bar{x}=(y',z)$) is also positive definite.
Recall that $W(t)=R_{Y}\circ{}U(t)$ so we have (for $d=n-1$)
$$W(t)f(x)=\frac{1}{(2\pi{}h)^{d}}\iint{}e^{\frac{i}{h}(\phi(t,(y',0),\eta)-w\cdot\eta)}b(t,y',\eta,h)u(w)dwd\eta$$
In what follows we will write $\phi(t,(y',0),\eta)=\phi(t,y',\eta)$ and for $\eta\in\R^{d}$ understand $\langle{}y',\eta\rangle=\langle{}(y',0),\eta\rangle$. All dashed variable are in $\R^{k-1}$ and all undashed variables are in $\R^{d}=\R^{n-1}$.

\begin{prop}
\label{stress}
If $W(t)$ is as above then it satisfies the estimates
$$\norm{W(t)W^{\star}(s)f}_{L^{\infty}}\leq{}h^{-\frac{n-1}{2}}(h+|t-s|)^{-\frac{n-1}{2}}\norm{f}_{1}$$
$$\norm{W(t)W^{\star}(s)f}_{L^{2}}\leq{}h^{-\frac{n-k}{2}}(h+|t-s|)^{-\frac{n-k}{2}}\norm{f}_{2}$$

\end{prop}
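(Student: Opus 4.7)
The plan is to compute the Schwartz kernel of $W(t)W^{\star}(s)$ from the FIO representation of Proposition \ref{FIO} and estimate it by stationary phase, in all of $\eta$ for the $L^{\infty}$ bound and in only the subset dual to $z$ for the $L^{2}$ bound. Composing $W(t)=R_{Y}\circ U(t)$ with its adjoint, the intermediate $w$-integration collapses to a delta in the two $\eta$-copies and leaves
$$W(t)W^{\star}(s)g(y') = \int K(y',\tilde y')\,g(\tilde y')\,d\tilde y',\quad K(y',\tilde y') = \frac{1}{(2\pi h)^{d}}\int e^{i\Phi/h}b(t,y',\eta,h)\overline{b(s,\tilde y',\eta,h)}\,d\eta,$$
where $\Phi(\eta)=\phi(t,y',\eta)-\phi(s,\tilde y',\eta)$ and $d=n-1$ (the $E(t)$ remainders from Proposition \ref{FIO} contribute only $O(h^{\infty})$). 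The crucial input is the $\eta$-Hessian of $\Phi$: differentiating the eikonal equation twice in $\eta$ together with $\phi(0,\bar x,\eta)=\bar x\cdot\eta$ gives $\partial^{2}_{\eta}\phi|_{t=0}=0$ and $\partial_{t}\partial^{2}_{\eta}\phi|_{t=0}=-\partial^{2}_{\xi}a$, so
$$\partial^{2}_{\eta}\Phi = -(t-s)\,\partial^{2}_{\xi}a(\bar x,\eta) + O\bigl((t-s)^{2}+|y'-\tilde y'|(|t|+|s|)\bigr).$$
By (A2) this is non-singular on a small enough patch, with $|\det\partial^{2}_{\eta}\Phi|\asymp|t-s|^{n-1}$ in the full direction and $|\det\partial^{2}_{\eta''}\Phi|\asymp|t-s|^{n-k}$ on the subblock of the $n-k$ variables $\eta''$ dual to $z$.

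For the $L^{1}\to L^{\infty}$ bound I would apply stationary phase to the full $\eta$-integral. In the regime $|t-s|\geq h$ this yields $|K|\lesssim h^{-(n-1)/2}|t-s|^{-(n-1)/2}$; in the regime $|t-s|\leq h$ the trivial estimate $|K|\leq h^{-d}\int|b\bar b|\,d\eta\lesssim h^{-(n-1)}$ is already of order $h^{-(n-1)/2}(h+|t-s|)^{-(n-1)/2}$. Combining the two gives the first estimate.

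For the $L^{2}\to L^{2}$ bound I would split into the same two regimes. When $|t-s|\leq h$, use $W(t)=R_{Y}\circ U(t)$, the approximate $L^{2}$-unitarity of $U(t)$ on $\R^{n-1}$, and Lemma \ref{semiclassicalsobolev} applied in the $z$-coordinates to the localised function $U(t)f$; this yields $\norm{W(t)f}_{L^{2}}\lesssim h^{-(n-k)/2}\norm{f}_{L^{2}}$ and hence $\norm{W(t)W^{\star}(s)}_{L^{2}\to L^{2}}\lesssim h^{-(n-k)}$, matching the stated bound in this range. When $|t-s|\geq h$ I would instead apply stationary phase only in the $n-k$ variables $\eta''$; the prefactor $h^{(n-k)/2}|t-s|^{-(n-k)/2}$ comes out, leaving
$$K(y',\tilde y') = \frac{C_{n,k}}{h^{(n-k)/2}|t-s|^{(n-k)/2}}\cdot\frac{1}{(2\pi h)^{k-1}}\int e^{i\Phi_{0}(y',\tilde y',\eta')/h}\,\tilde c(y',\tilde y',\eta',h)\,d\eta'.$$
Since $\partial_{y'}\partial_{\eta'}\phi(0,y',\eta)=\mathrm{Id}_{k-1}$, the reduced mixed Hessian $\partial_{y'}\partial_{\eta'}\Phi_{0}$ is a small perturbation of the identity on a small enough patch, so the second factor is the kernel of a compactly supported semiclassical FIO of order zero on $\R^{k-1}$; such operators are $L^{2}$-bounded uniformly in $h$, delivering the required $h^{-(n-k)/2}|t-s|^{-(n-k)/2}$.

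The main technical obstacle is the partial stationary phase step in the $L^{2}$ bound: one has to check that on the support of $b\bar b$ the critical point $\eta''_{c}(y',\tilde y',\eta',t,s)$ determined by $\partial_{\eta''}\Phi=0$ exists and depends smoothly on its arguments, and that the reduced phase $\Phi_{0}$ retains the non-degeneracy needed to invoke standard $L^{2}$-boundedness of compactly supported semiclassical FIOs. Both follow from (A2) and the eikonal initial condition, but only after restricting to a patch small enough that the $O((t-s)^{2})$ and $O(|y'-\tilde y'|(|t|+|s|))$ error terms in $\partial^{2}_{\eta}\Phi$ cannot overwhelm the leading $-(t-s)\partial^{2}_{\xi}a$ contribution.
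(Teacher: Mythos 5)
Your $L^{\infty}$ argument is essentially the paper's: stationary phase in all $d=n-1$ variables $\eta$ using the non-degenerate $\eta$-Hessian $(t-s)\partial^{2}_{\eta}a+O(\cdot)$ supplied by (A2), plus the trivial kernel bound for $|t-s|\lesssim h$. Your $|t-s|\lesssim h$ branch of the $L^{2}$ estimate — unitarity of $U(t)$ together with Lemma \ref{semiclassicalsobolev} applied in the $n-k$ transverse variables — is a sensible and slightly different route from the paper, which instead combines $|W(t,s,y',v')|\lesssim h^{-d}$ with the fact that the kernel is supported in $|y'-v'|\lesssim h$ and applies Young; both give $h^{-(n-k)}$.

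The gap is in the $|t-s|\gtrsim h$ branch of the $L^{2}$ bound. The assertion that, after partial stationary phase in $\eta''$, the residual kernel
$$\frac{1}{(2\pi h)^{k-1}}\int e^{i\Phi_{0}(y',\tilde y',\eta')/h}\,\tilde c(y',\tilde y',\eta',h)\,d\eta'$$
is ``a compactly supported semiclassical FIO of order zero'' and hence $L^{2}$-bounded uniformly in $h$ is not justified: $\tilde c$ is not in $S^{0}$ uniformly in $(t,s)$. Differentiating the critical-point relation $\partial_{\eta''}\Phi(\eta',\eta''_{c})=0$ in $y'$ gives $\partial_{y'}\eta''_{c}=-(\partial^{2}_{\eta''}\Phi)^{-1}\partial_{y'}\partial_{\eta''}\Phi$, where $\partial^{2}_{\eta''}\Phi\sim(t-s)$ and $\partial_{y'}\partial_{\eta''}\Phi=O(|t|)$ (the off-diagonal block of $\partial_{\eta}\partial_{\bar x}\phi$ vanishes at $t=0$), so $|\partial^{\alpha}_{y'}\eta''_{c}|\lesssim(|t|/|t-s|)^{|\alpha|}$, and by a Lemma~\ref{derivativebounds}-type estimate the same growth passes to the derivatives of $\tilde c$. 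On the range $Mh\le|t-s|\le|t|+|s|$ this can be as bad as $h^{-|\alpha|}$, so the standard uniform $L^{2}$-boundedness theorem for order-zero semiclassical FIOs does not directly apply. The paper's route is built precisely to handle this: after \emph{full} $\eta$-stationary phase, the oscillatory factor $e^{i\psi/h}$ has mixed Hessian $\partial_{y'}\partial_{v'}\psi\sim 1/(t-s)$, so in the squared-$L^{2}$ kernel each integration by parts gains $h|t-s|/|v'-w'|$ from the phase while losing only $1/|t-s|$ from the amplitude, netting $h/|v'-w'|$. Your reduced phase $\Phi_{0}$ has mixed Hessian $\approx\mathrm{Id}_{k-1}$, so the phase gain per integration by parts is only $O(h)$ and there is nothing to absorb the amplitude loss; an explicit derivative-tracking cancellation argument is needed here, not a black-box theorem. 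A smaller omission: the paper splits the kernel into $W_{1}+W_{2}$ and treats the region $|y'-v'|\gg|t-s|$ separately by non-stationary phase; you would still need to verify that, in your setup, the $\eta''$-critical point stays inside the compact support of $b\bar b$ over the whole range you integrate, or to handle that region separately.
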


\begin{proof}
 First we get a $L^{\infty}$ bound on the Schwartz kernel of $W(t)W^{\star}(s)$. This result can be found in \cite{koch} but for convenience we repeat it here. Using the integral representation for $U(t)$ and the fact that $W(t)f$ is the restriction of $U(t)f$ to $Y$ we write $W(t)W^{\star}(s)f$ as,
 $$W(t)W^{\star}(s)f=\int{}W(t,s,y',v')f(v')dv'$$
 where
 $$W(t,s,y',v')=\frac{1}{(2\pi{}h)^{2d}}\int_{\R^{3d}}e^{\frac{i}{h}(\phi(t,y',\eta)-\phi(s,v',\zeta)-<w,\eta-\zeta>)}Bdwd\eta{}d\zeta,$$
$B=B(t,s,y',v',w,\eta,\zeta;h)\in{}S(1)\cap{}C_{c}^{\infty}(\R^{2+6d})$. To find an estimate for $|W(t,s,y',v')|$ we will use repeated applications of the stationary phase method. First we calculate the critical points in $w$ and $\zeta$ allowing us to perform the $(w,\zeta)$ integration. The phase function $\phi$ is stationary and non-degenerate at $\zeta=\eta$, $w=\partial_{\zeta}\phi(s,v',\zeta)$ and so the stationary phase method implies that
 $$W(t,s,y',v')=\frac{1}{(2\pi{}h)^{d}}\int_{\R^{d}}e^{\frac{i}{h}(\phi(t,y',\eta)-\phi(s,v',\eta))}B_{1}(t,s,y',v',\eta;h)d\eta.$$
Finally we must use stationary phase again to deal with the $\eta$ integration. From the initial condition on $\phi$ in the formulation of the parametrix we can write
 $$\phi(s,y',\eta)-\phi(s,v',\eta)=\langle{}y'-v',\eta\rangle+\langle{}y'-v',sF(s,y',v',\eta)\rangle$$
 and so defining the phase function $\tilde{\phi}$ by 
 $$\tilde{\phi}(t,s,y',v',\eta)=\phi(t,y',\eta)-\phi(s,v',\eta)$$
 we have that
 $$\tilde{\phi}(t,s,y',v',\eta)=(t-s)a(0,y',\eta)+\langle{}y'-v',\eta+sF(s,y',v',\eta)\rangle+O(t-s)^{2}.$$
 So the phase is stationary when
 \begin{equation}0=\partial_{\eta}\tilde{\phi}=(Id+s\partial_{\eta}F)(y'-v')+(t-s)(\partial_{\eta}a+O(t-s)).\label{stationary}\end{equation}
 When $s$ is small, $Id+s\partial_{\eta}F$ is invertible and this implies that at a critical point
 $$|y'-v'|=O(t-s).$$
 The Hessian is given by
 $$\partial_{\eta}^{2}\tilde{\phi}=s\partial_{\eta}^{2}\langle{}y'-v',F\rangle+(t-s)(\partial_{\eta}^{2}a+O(t-s))$$
 $$=(t-s)(\partial_{\eta}^{2}a+O(|t|+|s|)$$
 where $\partial^{2}_{\eta}a=\partial^{2}_{\eta}a(0,y',\eta)$.
Here we use the non-degeneracy of $\partial^{2}_{\eta}a$ to give that if $t$ and $s$ are sufficiently small then, at a critical point,
 \begin{equation}\partial_{\eta}^{2}\tilde{\phi}=(t-s)\Psi(y',v',t,s,\eta)\label{2deriv}\end{equation}
 where $\Psi(y',v',t,s,\eta)$ is an invertible matrix,
 $$\det\Psi(y',v',t,s,\eta)\geq{}c>0$$
 and the elements of $\Psi(y',v',t,s,\eta)$ are smooth in all variables. So for $|t-s|>Mh$ for some suitably large $M$ we can apply the stationary phase method to conclude that
 $$|W(t,s,y',v')|\leq{}\frac{C}{h^{d}}\left(1+\frac{|t-s|}{h}\right)^{-\frac{d}{2}}$$
 $$\lesssim{}h^{-\frac{d}{2}}(h+|t-s|)^{-\frac{d}{2}}.$$
 When $|t-s|<Mh$ we can use trivial estimates to show that
 $$|W(t,s,y',v')|\leq{}Ch^{-d}$$
 $$\lesssim{}h^{-\frac{d}{2}}(h+|t-s|)^{-\frac{d}{2}}.$$
 From these estimates we can obtain the necessary bounds on the $L^{1}\to{}L^{\infty}$ norm of $W(t)W(s)^{\star}$. We have,
 $$\norm{W(t)W(s)^{\star}}_{L^{1}\rightarrow{}L^{\infty}}\lesssim\mbox{esssup}|W(t,s,y,v)|\leq{}Ch^{-\frac{d}{2}}(h+|t-s|)^{-\frac{d}{2}}.$$
For the $L^{2}$ estimate we need to use the oscillations of $W(t,s,y',v')$ itself. First note that from the critical point equation (\ref{stationary}) we have that if $|y'-v'|\geq{}K|t-s|$ for some suitably large $K$, critical points cannot occur. In this case we can estimate $|W
(t,s,y',v')|$ by nonstationary phase obtaining
$$|W(t,s,y',v')|\lesssim{}h^{-d}\left(1+\frac{|y'-v'|}{h}\right)^{-N}$$
and as $|y'-v'|\geq{}K|t-s|$ 
$$|W(t,s,y',v')|\lesssim{}h^{-d}\left(1+\frac{|y'-v'|}{h}\right)^{-N}\left(1+\frac{|t-s|}{h}\right)^{-N}.$$
In view of this we split 
$$W(t,s,y',v')=W_{1}(t,s,y',v')+W_{2}(t,s,y',v')$$
where
$$W_{1}(t,s,y',v')=\zeta\left(\frac{|y'-v'|}{|t-s|}\right)W(t,s,y',v')$$
$$W_{2}(t,s,y',v')=\left(1-\zeta\left(\frac{|y'-v'|}{|t-s|}\right)\right)W(t,s,y',v')$$
and $\zeta(r):\R\to\R$ is a smooth cut off function
$$\zeta(r)=\begin{cases}
1&|r|\leq{}\frac{3}{2}K\\
0&|r|\geq{}2K\end{cases}.$$
We now have
$$W(t)W^{\star}(s)=(W(\tau)W^{\star}(s))_{1}+(W(\tau)W^{\star}(s))_{2}$$
where $(W(t)W^{\star}(s))_{i}$ is the operator with integral kernel $W_{i}(t,s,y',v')$. Now by Young's inequality
$$\norm{(W(t)W^{\star}(s))_{2}f}_{L^{2}}\lesssim{}h^{-d}\left(1+\frac{|t-s|}{h}\right)^{-N}\norm{f}_{L^{2}}\int{}\left(1+\frac{|y'|}{h}\right)^{-N}dy'$$
$$\lesssim{}h^{-(d-k+1)}\left(1+\frac{|t-s|}{h}\right)^{-N}\norm{f}_{L^{2}}$$
it therefore only remains to deal with $(W(t)W^{\star}(s))_{1}$.

When $|t-s|\geq{}Mh$ we have
 $$W_{1}(t,s,y',v')=\frac{e^{\frac{i}{h}\psi(t,s,y',v')}b(t,s,y',v')\zeta\left(\frac{|y'-v'|}{|t-s|}\right)}{h^{d/2}(h+|t-s|)^{d/2}}$$
 where 
 $$\psi(t,s,y',v')=\tilde{\phi}(t,s,y',v',\eta(y',v',t,s))$$
and $\eta(y',v',t,s)$ is determined by (\ref{stationary}) (the implicit function theorem guarantees that given $\tilde{\phi}_{\eta}=0$ we can solve for $\eta=\eta(y',v',t,s)$ due to (\ref{2deriv}) and the invertibility of $\Psi(y',v')$). To exploit these oscillations we square the $L^{2}$ norm of $(W(\tau)W^{\star}(s))_{1}f$ and use nonstationary phase methods. We therefore need derivative bounds (in $y'$) on $b(t,s,y',v')$. Lemma \ref{derivativebounds} gives us that bounds on $D_{y'}^{\beta}b(t,s,y',v')$ depend only on bounds on $D_{y'}^{\alpha}\eta(y',v',t,s)$, $0\leq|\alpha|\leq|\beta|$.

\begin{lem}\label{derivativebounds}
If
$$\int{}e^{i\lambda\phi(x,\xi)}a(x,\xi)d\xi$$
is an oscillatory integral localised around a ($x$ dependent) nondegenerate critical point $\varphi(x)=(\varphi_{1}(x),\ldots,\varphi_{d}(x))$ and
\begin{equation}|D_{x}^{\alpha}\varphi_{i}(x)|\lesssim{}\gamma^{|\alpha|}\label{lemmaass}\end{equation}
for any multi-index $\alpha$ and $1\leq{}i\leq{}n$ then
$$\int{}e^{i\lambda\phi(x,\xi)}a(x,\xi)d\xi=(1+\lambda)^{\frac{d}{2}}e^{i\lambda{}\phi(x,\varphi(x))}b(x)$$
where $b(x)$ obeys the bounds
$$|D_{x}^{\beta}b(x)|\lesssim{}\gamma^{|\beta|}$$
for any multi-index $\beta$.

\begin{proof}

The method of stationary phase gives us the representation of
$$\int{}e^{i\lambda\phi(x,\xi)}a(x,\xi)d\xi=(1+\lambda)^{\frac{d}{2}}e^{i\lambda{}\phi(x,\varphi(x))}b(x)$$
so it remains only to check the derivative bounds of $b(x)$. We write 
$$b(x)=(1+\lambda)^{-\frac{d}{2}}\int{}e^{i\lambda\tilde{\phi}(x,\xi)}a(x,\xi)d\xi$$
where $\tilde{\phi}(x,\xi)=\phi(x,\xi)-\phi(x,\varphi(x))$. From the Morse lemma it is enough to prove the bounds for
$$\tilde{\phi}(x,\xi)=\sum_{j=1}^{m}(\xi_{j}-\varphi_{j}(x))^{2}-\sum_{j=m}^{n}(\xi_{j}-\varphi_{j}(x))^{2}.$$
In this case we have 
$$\frac{\partial\tilde{\phi}(x,\xi)}{\partial{}x_{i}}=-2\sum_{j=1}^{m}(\eta_{j}-\varphi_{j})\frac{\partial\varphi_{j}(x)}{\partial{}x_{i}}+2\sum_{j=m}^{n}(\eta_{j}-\varphi_{j})\frac{\partial\varphi_{j}(x)}{\partial{}x_{i}}$$
so
\begin{equation}\frac{\partial\tilde{\phi}(x,\xi)}{\partial{}x_{i}}=-[\partial_{x_{i}}\varphi(x)]^{T}\nabla_{\xi}\tilde{\phi}(x,\xi)\label{derivid}\end{equation}
where
$$[\partial_{x_{i}}\varphi(x)]=\begin{bmatrix}
\partial_{x_{i}}\varphi_{1}(x)\\
\vdots\\
\partial_{x_{i}}\varphi_{n}(x)\end{bmatrix}.$$
Now let
$$I(\lambda,a)=\int{}e^{i\lambda\tilde{\phi}(x,\xi)}a(x,\xi)d\xi$$
$$\frac{\partial}{\partial{}x_{i}}I(\lambda,a)=\int{}e^{i\lambda\tilde{\phi}(x,\xi)}\left(i\lambda\frac{\partial\varphi(x)}{\partial{}x_{i}}a(x,\xi)+\frac{\partial{}a(x,\xi)}{\partial{}x_{i}}\right)d\xi$$
By (\ref{derivid}) we have
$$\int{}e^{i\lambda\tilde{\phi}(x,\xi)}i\lambda\frac{\partial\varphi(x)}{\partial{}x_{i}}a(x,\xi)d\xi=-[\partial_{x_{i}}\varphi(x)]^{T}\int{}e^{i\lambda\tilde{\phi}(x,\xi)}i\lambda\nabla_{\xi}\tilde{\phi}(x,\xi)a(x,\xi)$$
and integrating by parts
$$\int{}e^{i\lambda\tilde{\phi}(x,\xi)}i\lambda\frac{\partial\varphi(x)}{\partial{}x_{i}}a(x,\xi)d\xi=[\partial_{x_{i}}\varphi(x)]^{T}\int{}e^{i\lambda\tilde{\phi}(x,\xi)}\nabla_{\xi}a(x,\xi)d\xi$$
so
$$\frac{\partial}{\partial{}x_{i}}I(\lambda,a)=I(\lambda,[\partial_{x_{i}}\varphi]^{T}\nabla_{\xi}a+\partial_{x_{i}}a).$$
The bounds on the derivatives of $b(x)$ therefore follow from (\ref{lemmaass}) and stationary phase estimates.

\end{proof}

\end{lem}

To obtain derivative bounds on $\eta(y',v',t,s)$ we differentiate (\ref{stationary}) in $y'$ to obtain
$$0=(\widetilde{Id}_{k-1,d}+O(|t|+|s|))+[\partial_{y'}\eta(y',v',t,s)]^{T}(t-s)\Psi(y',v')$$
where $\widetilde{Id}_{k-1,d}$ is the $(k-1,d)$ dimensional matrix $[Id_{k-1}|0]$. Therefore
\begin{equation}(t-s)[\partial_{y'}\eta(y',v',t,s)]^{T}=-(\widetilde{Id}_{k-1,d}+O(|t|+|s|))\Psi(y',v')^{-1}.\label{etaeq}\end{equation}
This gives us
$$|D_{y'_{i}}\eta(y',v',t,s)|\lesssim{}\frac{1}{|t-s|}$$
to obtain the multi-index bound we simply differentiate (\ref{etaeq}) leading to
$$|D_{y'}^{\alpha}\eta(y',v',t,s)|\lesssim{}\left(\frac{1}{|t-s|}\right)^{|\alpha|}$$
for any multi-index $\alpha$. So by Lemma \ref{derivativebounds} with $\lambda=|t-s|/h$ and $\gamma=|t-s|^{-1}$
$$|D^{\beta}_{y'}b(t,s,y',v')|\lesssim{}\left(\frac{1}{|t-s|}\right)^{|\beta|}$$
for any multi-index $\beta$.

Now we have
 $$\norm{(W(t)W^{\star}(s))_{1}f}_{L^{2}}^{2}=\iiint{}W_{1}(t,s,y',v')\overline{W}_{1}(t,s,y',w')f(v')\bar{f}(w')dv'dw'dy'$$
 $$=\iint{}\widetilde{W}(t,s,v',w')f(v')\bar{f}(w')dv'dw'$$
 where
 \begin{equation}\widetilde{W}(t,s,v',w')=\int{}W_{1}(t,s,y',v')\overline{W}_{1}(t,s,y',w')dy'.\label{L2kernel}\end{equation}
We will estimate (\ref{L2kernel}) via non-stationary phase estimates. The phase function in question is
$$\psi(t,s,y',v')-\psi(t,s,y',w').$$
From Taylor's theorem we have that
$$\nabla_{y_{i}'}[\psi(t,s,y',v')-\psi(t,s,y',w')]=\sum_{j=1}^{k-1}\frac{\partial^{2}\psi}{\partial{}y'_{i}\partial{}v'_{j}}(v'-w')+O(|v'-w'|^{2}),$$
written in matrix form this is
$$\nabla_{y'}[\psi(t,s,y',v')-\psi(t,s,y',w')]=\frac{\partial^{2}\psi}{\partial{}y'\partial{}v'}(v'-w')+O(|v'-w'|^{2}).$$
So we study the matrix $\frac{\partial^{2}\psi}{\partial{}y'\partial{}v'}$. As
$$\partial_{\eta}\tilde{\phi}(t,s,y',v',\eta(y',v',t,s))\equiv{}0$$
and
$$\tilde{\phi}(t,s,y',v',\eta)=\phi(t,y',\eta)-\phi(s,v',\eta)\Rightarrow\frac{\partial^{2}\tilde{\phi}}{\partial{}y'_{i}\partial{}v'_{j}}=0$$
we get
$$\frac{\partial^{2}\psi}{\partial{}y'_{i}\partial{}v'_{j}}=-\sum_{k,l=1}^{d}[\partial_{v_{j}'}\eta_{k}(y',v',t,s)]\left(\partial_{\eta_{k}}\partial_{\eta_{l}}\tilde{\phi}\right)[\partial_{y'_{i}}\eta_{l}(y',v',t,s)]$$
for $i,j=1\dots{}k-1$. In matrix form this is
$$\frac{\partial^{2}\psi}{\partial{}y'\partial{}v'}=[\partial_{v'}\eta(y',v',t,s)]^{T}\partial_{\eta}^{2}\tilde{\phi}[\partial_{y'}\eta(y',v',t,s)].$$
We already have
$$\partial_{\eta}^{2}\tilde{\phi}=(t-s)\Psi(y',v')=(t-s)(\partial^{2}_{\eta}a+O(|t|+|s|))$$
and
$$(t-s)[\partial_{y'}\eta(y',v',t,s)]^{T}=-(\widetilde{Id}_{k-1,d}+O(|t|+|s|))\Psi(y',v')^{-1}.$$
so we only need an expression for $[\partial_{v'}\eta(y',v',t,s)]^{T}$. Differentiating (\ref{stationary}) in $v'$ gives
$$0=-(\widetilde{Id}_{k-1,d}+O(|t|+|s|))+[\partial_{v'}\eta(y',v',t,s)]^{T}(t-s)\Psi(y',v')$$
so
$$(t-s)[\partial_{v'}\eta(y',v',t,s)]^{T}=-(\widetilde{Id}_{k-1,d}+O(|t|+|s|))\Psi(y',v')^{-1}.$$
Therefore
$$[\partial_{v'}\eta(y',v't,s)]^{T}\partial_{\eta}^{2}\tilde{\phi}[\partial_{y'}\eta(y',v',t,s)]=\frac{-1}{t-s}\left(\widetilde{Id}_{k-1,d}(\partial^{2}_{\eta}a)^{-1}\widetilde{Id}_{k-1,d}^{T}+O(|t|+|s|)\right).$$
The leading term is the upper $(k-1,k-1)$ block matrix of $\partial_{\eta}^{2}a$. As $\partial_{\eta}^{2}a$ is positive definite the matrix 
$\frac{\partial^{2}\psi}{\partial{}y'\partial{}v'}$ is non-degenerate. Consequently
$$\left|\nabla_{y'}[\psi(t,s,y',v')-\psi(t,s,y',w')]\right|\geq{}\frac{c|v'-w'|}{|t-s|}.$$
Therefore any integration by parts of (\ref{L2kernel}) will gain a factor of
$$\frac{h|t-s|}{|v'-w'|}.$$  
However each integration by parts also gains a factor of
$$\frac{1}{|t-s|}$$
 from differentiating the symbol. Overall each integration by parts gains
$$\frac{h}{|v'-w'|}$$
So we have a bound on $\widetilde{W}(t,s,v',w')$ of
$$|\widetilde{W}(t,s,v',w')|\lesssim{}h^{-d}|t-s|^{-d}\left(1+\frac{|v'-w'|}{h}\right)^{-N}\int{}\zeta\left(\frac{|y'-v'|}{|t-s|}\right)\zeta\left(\frac{|y'-w'|}{|t-s|}\right)dy'$$
$$\lesssim{}h^{-d}|t-s|^{-(d-k+1)}\left(1+\frac{|v'-w'|}{h}\right)^{-N}$$
and
$$\norm{(W(t)W^{\star})_{1}(s)f}_{L^{2}}^{2}\lesssim{}h^{-d}|t-s|^{-(d-k+1)}\iint\frac{f(v')\bar{f}(w')dv'dw'}{\left(1+\frac{|v'-w'|}{h}\right)^{N}}$$
for all $N>0$. Therefore by Holder and Young
$$\norm{(W(t)W^{\star}(s))_{1}f}_{L^{2}}^{2}\lesssim{}h^{-d}|t-s|^{-(d-k+1)}h^{k-1}\norm{f}_{L^{2}}^{2}$$
$$\lesssim{}h^{-(d-k+1)}|t-s|^{-(d-k+1)}\norm{f}^{2}_{L^{2}}.$$
So for $|t-s|\geq{}Mh$
$$\norm{(W(t)W^{\star}(s))_{1}f}_{L^{2}}\lesssim{}h^{-\frac{d-k+1}{2}}|t-s|^{-\frac{d-k+1}{2}}\norm{f}_{L^{2}}.$$
It now remains to deal with the case $|t-s|\leq{}Mh$. This can be achieved by scaling. In this case $W_{1}(t,s,y',v')$ is only supported on the region $|y'-v'|\lesssim{}h$. We have that
 $$|W(t,s,y',v')|\lesssim{}h^{-d}.$$
Using Young's inequality we obtain
$$\norm{(W(t)W^{\star}(s))_{1}f}_{L^{2}}\lesssim{}h^{-(d+k-1)}\norm{f}_{L^{2}}.$$ Hence
 $$\norm{(W(t)W^{\star}(s))_{1}f}_{L^{2}}\leq{}Ch^{-\frac{d-k+1}{2}}(h+|t-s|)^{-\frac{d-k+1}{2}}\norm{f}_{L^{2}}.$$
 Putting this together with the estimates we already had for $(W(t)W^{\star}(s))_{2}$ we obtain
 $$\norm{W(t)W^{\star}(s)f}_{L^{2}}\leq{}Ch^{-\frac{d-k+1}{2}}(h+|t-s|)^{-\frac{d-k+1}{2}}\norm{f}_{L^{2}}.$$
 As we have used one of our original spatial variables as time we have $d=n-1$. This completes the proof.
 
 \end{proof}
 
\begin{remark}\label{posdef}
In these submanifold cases it is not enough to assume, as Koch-Tataru-Zworski \cite{koch} did in the full manifold case, that the second fundamental form on $\{\xi\mid{}p(x_{0},\xi)=0\}$ is merely non-degenerate. This  would imply that $\partial_{\eta}^{2}a$ is non-degenerate, however that is not enough to guarantee that the upper $(k-1,k-1)$ block matrix of $\partial_{\eta}^{2}a$ is also non-degenerate. Therefore we cannot prove the $L^{2}\to{}L^{2}$ estimates on $W(t)W^{\star}(s)$ if we assume only non-degeneracy. Note that the $L^{1}\to{}L^{\infty}$ estimate does however still hold under the weaker assumption of non-degeneracy.
\end{remark}

 We can now use Strichartz estimates (Proposition \ref{stress}) on $W(t)$.  We are in the case that $\mu_{1}=\sigma_{1}$ and $\mu_{2}=\sigma_{2}$, so we have 
 $$\left(\int\norm{W(t)f}_{L^{p}}^{r}dt\right)^{1/r}\lesssim{}h^{-1/r}\norm{f}_{L^{2}}$$
 when
 $$\frac{1}{r}+\frac{k-1}{2p}=\frac{n-1}{4}.$$
So this gives us that when $r=p$

$$p=\frac{2(k+1)}{n-1}.$$
In particular for $k=d-1$,

$$p=\frac{2n}{n-1}.$$
When $k=n-2$, $p=2$ so this is an endpoint. When $k\leq{}d-3$, $\frac{2(k+1)}{n-1}<2$ so the Strichartz estimates give us no point $(p,p)$.

\section{Completion of Proof in Case 2}\label{theoremproof}

Recall that Case 2 was $\partial_{\xi_{z}}p(x_{0},\xi_{0})=0$ and so by (A1) in Definition \ref{admissable} $\partial_{\xi_{y}}p(x_{0},\xi_{0})\neq{}0$. Without loss of generality we assume $\partial_{\xi_{y_{1}}}p(x_{0},\xi_{0})\neq{}0$. Around the point $(x_{0},\xi_{0})$ where $p(x_{0},\xi_{0})=0$ we use $\partial_{\xi_{y}}p(x_{0},\xi_{0})\neq{}0$ and the implicit function theorem to factorise $p(x,\xi)$ as
$$p(x,\xi)=e(x,\xi)(\xi_{y_{1}}-a(x,\xi')).$$
So $Pu=O_{L^{2}}(h)$ implies
$$e(x,hD)(hD_{y_{1}}-a(x,hD_{y'},hD_{z})u=O(h).$$
As $e(x,hD)$ is elliptic this implies
$$(hD_{y_{1}}-a(x,hD_{y'},hD_{z})u=hf(y_{1},x')$$
where $\norm{f}_{L^{2}(M)}=O_{L^{2}}(1)$.

Using Duhamel's principle we write

$$u(y_{1},x')=U(y_{1})u(0,x')+i\int_{0}^{y_{1}}U(y_{1}-s)f(s,x')ds.$$
When we restrict to the submanifold $Y$ by setting $z=0$ we get

$$u(y_{1},y',0)=W(y_{1})u(0,x')+i\int_{0}^{y_{1}}W(y_{1}-s)f(s,x')ds.$$

As we already have the $L^{\infty}$ estimates we are looking for a bound for the $L^{2}$ norm and the the bound given by the Strichartz estimates where appropriate.  Using Minkowski's inequality we have for any $q$ 

\begin{multline}\norm{u}_{L^{q}(Y)}\lesssim\left(\int\norm{W(y_{1})u_{0}}_{L^{q}_{y'}}^{q}dy_{1}\right)^{1/q}+\\
\int_{\R}\left(\int\norm{W(y_{1}-s)f(s,x')}_{L^{q}_{y'}}^{q}dy_{1}\right)^{1/q}ds\label{qnorm}\end{multline}
where $u_{0}=u(0,x')$.
Therefore to obtain a $L^{q}$ bound we need to estimate
$$\left(\int\norm{W(t)u_{0}}_{L_{y'}^{q}}^{q}dt\right)^{1/q}.$$

In the case where $k=n-1$ we obtain an estimate from Strichartz, see proposition \ref{stress}. Applying adjusted form of Strichartz estimates with $p=\frac{2n}{n-1}$ we have

$$\norm{u}_{L^{p}(Y)}\lesssim{}h^{-1/p}\norm{u_{0}}_{L_{x'}^{2}}+h^{-1/p}\int_{\R}\norm{f(s,x')}_{L^{2}_{x'}}ds$$
$$\lesssim{}h^{-1/p}.$$
For all other $k$ either there is no pair $(p,p)$ given by the Strichartz estimates or the pair is the endpoint pair $(2,2)$. 

We also need to obtain the $L^{2}$ estimates. These can be obtained directly from the bilinear form (\ref{bilinearform}).
\begin{prop}
The following submanifold estimates hold

$$\norm{u}_{L^{2}(Y)}\lesssim
\begin{cases}
h^{-\frac{n-k-1}{2}},&k\leq{}n-3\\
h^{-1/4},&k=n-1\end{cases}.$$
For $k=n-2$
$$\norm{u}_{L^{p}(Y)}\lesssim{}
\begin{cases}
h^{-\left(\frac{n-1}{2}-\frac{n-2}{p}\right)},&p>2\\
\log^{1/2}(1/h)h^{-1/2},&p=2\end{cases}.$$
\end{prop}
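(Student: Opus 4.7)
The plan is a single $TT^*$ (bilinear form) argument that handles all four cases uniformly. After Duhamel (equation \ref{qnorm}) and Minkowski in the $s$ variable for the inhomogeneous piece, it suffices to estimate the norm of $T : L^{2}(\R^{n-1}) \to L^{p}(Y)$ defined by $T u_{0}(y', y_{1}) = W(y_{1}) u_{0}(y')$; the inhomogeneous Duhamel term is then handled by applying the same bound to $W(y_{1} - s) f(s, \cdot)$ and using the compact $s$-support of $f$ produced by localisation.

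First I Riesz--Thorin interpolate the two kernel bounds of Proposition \ref{stress} to obtain, for every $p \in [2, \infty]$,
$$\|W(t) W^{\star}(s)\|_{L^{p'} \to L^{p}} \;\leq\; h^{-\beta_{p}}(h + |t-s|)^{-\beta_{p}}, \qquad \beta_{p} \;:=\; \sigma_{\infty} - \frac{2(\sigma_{\infty} - \sigma_{2})}{p},$$
with $\sigma_{\infty} = (n-1)/2$ and $\sigma_{2} = (n-k)/2$. Inserting this into the bilinear form $\langle T T^{\star} F, G\rangle$, applying H\"older on each $y'$-slice to pull out $\|F(s,\cdot)\|_{L^{p'}_{y'}}$ and $\|G(t,\cdot)\|_{L^{p'}_{y'}}$, and then Young's inequality on the resulting one-dimensional time convolution (with $1/(p/2) = 2/p$) reduces the matter to
$$\|T\|_{L^{2} \to L^{p}(Y)}^{2} \;\lesssim\; h^{-\beta_{p}} \bigl\|(h + |\cdot|)^{-\beta_{p}}\bigr\|_{L^{p/2}([-T, T])},$$
where the compact interval $[-T, T]$ is supplied by the spatial localisation and the local coordinate reduction of Section \ref{symbolfactorisation}.

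The proposition now falls out from a case analysis of the kernel norm. One computes $\|(h + |\cdot|)^{-\beta_{p}}\|_{L^{p/2}([-T, T])} \simeq h^{2/p - \beta_{p}}$ in the super-critical regime $\beta_{p} \cdot p/2 > 1$, $\simeq \log^{2/p}(1/h)$ at the borderline $\beta_{p} \cdot p/2 = 1$, and $\simeq 1$ in the sub-critical regime $\beta_{p} \cdot p/2 < 1$. At $p=2$ we have $\beta_{2} = (n-k)/2$, so the three regimes correspond respectively to $k \leq n-3$ (giving $\|T\| \lesssim h^{-(n-k-1)/2}$), $k = n-2$ (borderline, giving the logarithmic loss $h^{-1/2}\log^{1/2}(1/h)$), and $k = n-1$ (giving $\|T\| \lesssim h^{-1/4}$). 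For $k = n-2$ with $p > 2$, one has $\beta_{p} = (n-1)/2 - (n-3)/p$, which lies strictly in the super-critical regime, so $\|T\| \lesssim h^{-(\beta_{p} - 1/p)}$, and this exponent simplifies to $(n-1)/2 - (n-2)/p$ as required.

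The main obstacle is the bookkeeping around the borderline $\beta_{p} \cdot p/2 = 1$: the case $(k, p) = (n-2, 2)$ sits exactly on this line, which is what forces the unavoidable $\log^{1/2}(1/h)$ factor while leaving the $p > 2$ range clean. A secondary technicality is that the time integration must be over a compact set, because $\|(h+|\cdot|)^{-\beta_{p}}\|_{L^{p/2}(\R)}$ may diverge at infinity --- this is automatic from the compact symbol support guaranteed by the localisation assumption, but must be invoked explicitly to close the estimate.
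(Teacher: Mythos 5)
Your argument is correct and follows essentially the same route as the paper: reduce to the bilinear $TT^{\star}$ form, interpolate the $L^{1}\to L^{\infty}$ and $L^{2}\to L^{2}$ bounds of Proposition \ref{stress} to obtain the $L^{p'}\to L^{p}$ kernel exponent $\beta_{p}$, apply H\"older in the slice variable and Young in time, and reduce to estimating $\|(h+|\cdot|)^{-\beta_{p}}\|_{L^{p/2}}$ over a compact interval. The only difference is presentational: you package the three integrability regimes ($\beta_{p}p/2$ greater than, equal to, or less than $1$) into a single unified criterion, whereas the paper works out $p=2$ directly with $\sigma_{2}=(n-k)/2$ and treats $k=n-2$, $p>2$ via the interpolated $\beta(p,\tfrac{n-1}{2},1)$, but the computations and resulting exponents are identical.
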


\begin{proof}

We will determine these bounds directly from the estimates on the bilinear forms. We have that if
$$\iint\left|\langle{}W^{\star}(s)F(s),W^{\star}(t)G(t)\rangle\right|\lesssim{}h^{-\delta}\norm{F}_{L^{2}_{t}L^{2}_{x}}\norm{G}_{L_{t}^{2}L_{x}^{2}}$$
then
$$\left(\int\norm{W(t)f}_{L^{2}(X)}^{2}dt\right)^{1/2}\lesssim{}h^{-\delta/2}\norm{f}_{H}.$$
Therefore using the estimate determined in Proposition (\ref{stress}) we need to get an estimate on

$$h^{-\frac{n-k}{2}}\iint\frac{\norm{F(s)}_{2}\norm{G(t)}_{2}}{(h+|t-s|)^{\frac{n-k}{2}}}dsdt$$
which by H\"{o}lder is the same as estimating
$$h^{-\frac{n-k}{2}}\norm{(h+|t|)^{-\frac{n-k}{2}}\star\norm{F}_{2}}_{L^{2}_{t}}\norm{G}_{L^{2}_{t}L^{2}_{x}}.$$
Using Young's inequality this reduces to estimating
$$\norm{(h+|t|)^{-\frac{n-k}{2}}}_{L^{1}_{t}}.$$
As we are on a compact manifold and the ``time'' variable is actually one of our space variables this corresponds to estimating
$$h^{-\frac{n-k}{2}}\int_{0}^{C}(h+\tau)^{-\frac{n-k}{2}}d\tau.$$
Pulling the $h$ out of the denominator and making a change of variable gives means this is equivalent to estimating
$$\lesssim{}h^{-\frac{n-k}{2}}\cdot{}h^{-\frac{n-k}{2}}\cdot{}h\int_{0}^{C/h}(1+\sigma)^{-\frac{n-k}{2}}d\sigma$$
$$\lesssim{}h^{-(n-k-1)}\int_{0}^{C/h}(1+\sigma)^{-\frac{n-k}{2}}d\sigma.$$
When $k\leq{}d-3$ the integral is $O(1)$ therefore
$$\left(\int\norm{W(t)u}_{L^{2}}^{2}dt\right)^{1/2}\lesssim{}h^{-\frac{n-k-1}{2}}.$$
Substituting this into (\ref{qnorm}) we get that

$$\norm{u}_{L^{2}(Y)}\lesssim{}h^{-\frac{n-k-1}{2}}\left(1+\int_{\R}\norm{f(s,x')}_{L^{2}_{x'}}ds\right)$$
$$\lesssim{}h^{-\frac{n-k-1}{2}}.$$
When $k=n-1$ we  estimate

$$\int_{0}^{C/h}(1+\sigma)^{-1/2}d\sigma=\left[(1+\sigma)^{1/2}\right]^{C/h}_{0}$$\
$$\lesssim{}h^{-1/2}.$$
So

$$\left(\int\norm{W(t)u}_{L^{2}}^{2}dt\right)^{1/2}\lesssim{}h^{-1/4}.$$
Again substituting this estimate into (\ref{qnorm}) gives
$$\norm{u}_{L^{2}(Y)}\lesssim{}h^{-1/4}.$$
For $k=n-2$
$$\int_{0}^{C/h}(1+\sigma)^{-\frac{n-k}{2}}\lesssim\log(1/h),$$
so
$$\norm{u}_{L^{2}(Y)}\lesssim\log^{1/2}(1/h)h^{-1/2}.$$
For $p>2$ we estimate
$$h^{-\beta(p,\frac{n-1}{2},1)}\iint\frac{\norm{F(s)}_{L^{p'}}\norm{G(t)}_{L^{p'}}}{(h+|t-s|)^{\beta(p,\frac{n-1}{2},1)}}dsdt$$
by applying H\"{o}lder and then Young we have

$$h^{-\beta(p,\frac{n-1}{2},1)}\left(h^{-\frac{p}{2}\beta(p,\frac{n-1}{2},1)}h\int_{0}^{C/h}(1+\sigma)^{-\frac{p}{2}\beta(p,\frac{n-1}{2},1)}d\sigma\right)^{\frac{2}{p}}.$$
When $p>2$ the integral is $O(1)$ therefore

$$\left(\int\norm{W(t)u}_{p}^{p}dt\right)^{1/p}\lesssim{}h^{\beta(p,\frac{n-1}{2},1)+1/p}$$
$$=h^{-\left(\frac{n-1}{2}-\frac{n-2}{p}\right)}$$
which implies the estimate

$$\norm{u}_{L^{p}_{Y}}\lesssim{}h^{-\left(\frac{n-1}{2}-\frac{n-2}{p}\right)}.$$

\end{proof}

We can now estimate the other $L^{p}$ norms by interpolation between these estimates thereby arriving at the full range of estimates. This completes the proof of Theorem \ref{maintheorem}.

\begin{remark}
As noted in Remark \ref{posdef} the $L^{1}\to{}L^{\infty}$ estimate on $W(t)W^{\star}(s)$ holds if we weaken condition (A2) in definition \ref{admissable} to require the second fundamental form on $\{\xi\mid{}p(x_{0},\xi)=0\}$ to be non-degenerate. From this estimate by Young and Hardy-Littlewood-Sobolev we can still obtain some estimates for small $k$ and large $p$. If $k<\frac{n-1}{2}$ the the full range of estimates hold. For $k\geq{}\frac{n-1}{2}$ we obtain the estimates given by Theorem \ref{maintheorem} if
$$
\begin{cases}
p\geq{}\frac{4k}{n-1}\quad{}k>\frac{n-1}{2}\\
p>\frac{4k}{n-1}\quad{}k=\frac{n-1}{2}\end{cases}.
$$
\end{remark}

\bibliography{Lpestimates1}
\bibliographystyle{plain}

\end{document}